\numberwithin{equation}{section}
\theoremstyle{plain}
\newtheorem{thm}{thm}[section]
\newtheorem{theorem}[thm]{Theorem}
\newtheorem{corollary}[thm]{Corollary}
\newtheorem{proposition}[thm]{Proposition}
\theoremstyle{definition}
\newtheorem{remark}[thm]{Remark}
\newtheorem{definition}[thm]{Definition}
\newtheorem{defn-thm}[thm]{Definition-Theorem}
\newcommand{\dbar}{\overline{\partial}}
\newcommand{\pa}{\partial}
\newcommand{\xu}{\sqrt{-1}}
\begin{document}
\title{On the Kodaira dimension of some algebraic fiber spaces}
\makeatletter
\let\uppercasenonmath\@gobble
\let\MakeUppercase\relax
\let\scshape\relax
\makeatother

\author{Yongpan Zou}

\address{Yongpan Zou, School of Sciences, Institute for Theoretic Sciences, Westlake University, Hangzhou, Zhejiang Province, 310030, China}

\email{598000204@qq.com}
\keywords{Pseudoeffective divisor, Kodaira dimension, Canonical bundle formula}

\footnote{
2020 \textit{Mathematics Subject Classification}.
Primary 32J25; Secondary 14E15.
}

\begin{abstract}
In this paper, we study the descent of positivity of the canonical bundle along fiber spaces. 
As a consequence, we prove a conjecture of Schnell, establishing the equivalence between the Non-vanishing Conjecture and its generalized version proposed by Campana and Peternell.
\end{abstract}

\maketitle
 \tableofcontents

\section{Introduction}

In the theory of algebraic fiber spaces, the canonical bundle formula stands as a foundational result. Initially introduced by Kodaira for elliptic surfaces, the formula was later generalized by Fujita and further developed systematically by Kawamata, Fujino--Mori, Ambro, Kollár, and others.
The canonical bundle formula is a far-reaching generalization of the adjunction formula. It decomposes the complex geometry of $X$
into the geometry of the base $Y$, the geometry of the generic fiber, and the combinatorial data of the singular fibers, providing an indispensable tool for the modern classification of algebraic varieties. In this paper, we use analytic methods in complex algebraic geometry,
including singular Hermitian metrics with semipositive curvature,
extensions of metrics, and related techniques.

The following is the general setup of Kawamata’s semi-positivity theorem, and we adopt the notation and ideas from \cite{Tak22}.

\textbf{Setup I}: Let $f: X\to Y$ be a surjective morphism of smooth projective varieties with connected fibers. Let $P=\sum P_i$ (respectively $Q=\sum Q_j$) be a reduced simple normal crossing divisor on $X$ (respectively, on $Y$) such that $f^{-1}(Q)\subset P$ and $f$, and assume that $f$ is smooth over $Y\setminus Q$. Decompose \( P \) into its horizontal and vertical parts with respect to \( f \) as $P = P^h + P^v$, where the horizontal part is given by \( P^h = \sum_{i,\, f(P_i) = Y} P_i \), and the vertical part by \( P^v = \sum_{i,\, f(P_i) \neq Y} P_i \). Then \( f: \operatorname{Supp}(P^h) \to Y \) is relatively normal crossing over \( Y \setminus Q \), and \( f(\operatorname{Supp}(P^v)) = Q \).

\begin{theorem}[\cite{Kaw98}, Theorem 2] \label{Kawa}
In addition to Setup I, let \( D = \sum d_i P_i \) be a \( \mathbb{Q} \)-divisor on \( X \) satisfying the following conditions:
\begin{enumerate}
  \item \textit{The pair \( (X, D) \) is sub-klt; that is, the positive part of pair \( (X, D_+ := \sum_{d_i > 0} d_i P_i) \) is klt, meaning \( d_i < 1 \) (note that \( d_i < 0 \) is allowed). In particular, the round-up \( \lceil -D \rceil \) is effective.}
  
  \item \textit{The natural map \( \mathcal{O}_Y \to f_* \mathcal{O}_X(\lceil -D \rceil) \) is generically surjective.}
  
  \item \textit{There exists a \( \mathbb{Q} \)-divisor \( L \) on \( Y \) such that \( K_X + D \sim_{\mathbb{Q}} f^*(K_Y + L) \).}
\end{enumerate}
Let
\begin{align*}
&f^* Q_j = \sum_i a_{ji} P_i \quad \text{with } a_{ji} \in \mathbb{Z}_{>0},\\
&\bar{d}_i = \frac{d_i + a_{ji} - 1}{a_{ji}} \quad (\in \mathbb{Q}_{<1}) \quad \text{if } f(P_i) = Q_j,\\
&\beta_j = \max \{ \bar{d}_i \mid f(P_i) = Q_j \} \quad (\in \mathbb{Q}_{<1}),\\
& B_D = \sum_j \beta_j Q_j.
\end{align*}

By construction, the pair \( (Y, B_D) \) is sub-klt. Let $M_Y = L - B_D$, we have
\[
K_X + D \sim_{\mathbb{Q}} f^*(K_Y + B_D + M_Y).
\]
Then the \( \mathbb{Q} \)-divisor \( M_Y \) is nef.
\end{theorem}


Takayama, in \cite{Tak22}, strengthens Kawamata’s theorem by showing that the moduli part $M_Y$ in the canonical bundle formula admits a natural singular Hermitian metric with semi-positive curvature and vanishing Lelong numbers (logarithmic singularities). His main tools are the existence of Bergman kernel type singular Hermitian metrics on pluri\mbox{-}adjoint bundles, as introduced by Berndtsson, P\u{a}un, and Takayama~\cite{BP08, PT18}, together with his results on the singularities arising from fiber integration, which also play an essential role in our paper. It is worth noting that Bakker, Filipazzi, Mauri, and Tsimerman \cite{BFMT25} recently proved the b-semiampleness of \(M_{Y'}\) for any Ambro model (not $M_Y$). However, in this paper, the canonical metric of $M_Y$ constructed in \cite{Tak22} plays the central role.
The following result is our main theorem.

\begin{theorem} [= Theorem \ref{claim}] \label{claim-0}
 In addition to Setup I, and assume 
  $$\mathrm{Codim}_Y \, f(\mathrm{Supp}\, D_+) \geq 2.$$
  If the canonical divisor \( K_X \) is pseudoeffective, then there exists an effective divisor $C_D=\sum_j \gamma_j  Q_j$ which is supported on $Q$ such that the divisor \( K_Y + M_Y + B_D + C_D \) on \( Y \) is pseudoeffective too.
\end{theorem}

To establish the positivity of $K_Y + M_Y +B_D+C_D$, we first analyze it over the smooth locus $Y_0$, where the $(B_D+C_D)$-component vanishes. Over $Y_0$, the positivity of $K_X$ fails to descend to $K_Y$ because $f$ is not locally trivial. However, introducing the divisor $M_Y$--which encodes the information of the variation of complex structures along the fibers--resolves this issue, yielding a positive metric for $K_Y + M_Y$ on $Y_0$. To extend this metric globally to $Y$, we must incorporate $(B_D+C_D)$ equipped with its canonical metric, as $(B_D+C_D)$ captures the singularities of both the fibers and the morphism $f$. We also expect that the assumptions in the theorem can be weakened for further applications.

The main (albeit modest) contribution of this paper is to clarify the role of the moduli divisor in the descent of positivity for algebraic fiber spaces. The key point is that the semipositive metric on the moduli divisor naturally contains a metric induced by the relative canonical bundle $K_{X/Y}$. Viewing the problem from this perspective allows us to address some questions in complex algebraic geometry. 

It is also worth noting that there has been extensive work on the descent of
positivity of the anti-canonical bundle along fiber spaces. For example, for a
smooth fibration \( f \colon X \to Y \), if \( -K_X \) is ample, nef, or
pseudoeffective, then the same property holds for \( -K_Y \). Such questions can
be naturally incorporated into the study of the positivity of the direct image
sheaf
\(
f_*\bigl((K_{X/Y}+L)\otimes \mathcal{I}\bigr),
\)
where \( \mathcal{I} \) denotes the multiplier ideal sheaf; see
\cite{BP08, PT18} for more details. Note that when \( L = -K_X \), this direct
image sheaf is precisely \( -K_Y \).
In contrast, the present paper focuses on the positivity of canonical bundle rather than the
anti-canonical bundle. As a result, new methods are required, although the
positivity of direct image sheaves continues to play an important role.

As the first application, we prove the following conjecture of C. Schnell. In \cite{Sch22}, Schnell conjectured that the Campana--Peternell conjecture and the non-vanishing conjecture are equivalent by proposing the following conjecture. 

\begin{theorem} [=Theorem \ref{schnell-2}] \label{schnell-3}
Let \( f: X \to Y \) be an algebraic fiber space between smooth projective varieties, and suppose that the general fiber \( F \) satisfies \( \kappa(F) = 0 \). If there exists an ample divisor \( H \) on \( Y \) such that \( m_0K_X - f^*H \) is pseudo-effective for some \( m_0 \geq 1 \), then \( \kappa(X) = \dim Y \).
\end{theorem}

Recall that the non-vanishing conjecture states that if $K_X$ is pseudo-effective, then there exists a sufficiently large and divisible multiple of $K_X$ that is effective. In \cite{CP11}, Campana and Peternell proposed the following generalization of the non-vanishing conjecture: Let $X$ be a smooth projective variety and $D$ be an effective divisor on $X$. If there exists an $m\geq 1$ such that the divisor class $mK_X-D$ is pseudo-effective. Then we should have $\kappa(X) \geq \kappa(X, D)$. The Campana--Peternell conjecture includes the non-vanishing conjecture as a special case when $D = 0$. On the other hand, assuming the Abundance conjecture, one can deduce the Campana--Peternell conjecture as a consequence. On the other hand, the Campana–Peternell conjecture has implications for various Iitaka-type conjectures; see \cite{Sch22, PS23} for more details.

Schnell proved his conjecture under the assumption that \(K_Y\) is pseudoeffective in \cite{Sch22}; see also \cite{Kim24} for partial results.
The conjecture here is slightly different from the original formulation in \cite[Conjecture~10.1]{Sch22}.
As \cite{Kim24} said, the advantage of this formulation is that the canonical bundle formula of Fujino--Mori arises naturally. 
Following \cite{FM00, Kol07}, we may write the canonical divisor \( K_X \) as:
$$ K_X + D \sim_{\mathbb{Q}} f^*(K_Y + B_D + M_Y),$$
where the decomposition satisfies the following properties:
\begin{enumerate} \label{three-cond-1}
\item \( f_* \mathscr{O}_X (\lfloor iD_{-} \rfloor) = \mathscr{O}_Y \) for all \( i > 0 \); 
\item \( \operatorname{codim}_Y f(\operatorname{Supp} D_{+}) \geq 2 \);
\item \( H^0(X, mK_X) = H^0(Y, m(K_Y + B_D + M_Y)) \) for all sufficiently divisible integers \( m \) (by canonical bundle formula and equality (1)).
\end{enumerate}

In this framework, the proof of Schnell's conjecture~\ref{schnell-3} reduces to establishing the bigness of the divisor \(K_Y + B_D + M_Y\). Hence, we should consider the problem of the descent of positivity along the fiber space. Since our assumptions are stronger than those in Theorem~\ref{claim-0}, we may take \(C_D = 0\).


We believe that our methods can also be applied to the study of the Iitaka conjecture. As an illustration, we provide new direct proofs of the following classical results in algebraic geometry related to the Iitaka conjecture, using the analytically natural methods developed in this paper. For related developments, the reader may consult \cite{PS23, Par23}. 

\begin{corollary}[=Theorem \ref{G}] \cite[Proposition G]{PS23}
Let \(f \colon X \to Y\) be a smooth algebraic fiber space with connected fibers, and denote by \(F\) a general fiber. 
Assume that \(\kappa(X) \geq 0\).
Then $ K_Y$  is pseudoeffective.
\end{corollary}

\begin{corollary} [= Theorem \ref{A}] \cite[Theorem A]{PS23} 
Let \(f \colon X \to Y\) be a smooth algebraic fiber space with connected fibers, and denote by \(F\) a general fiber. 
Assume that \(\kappa(F) \geq 0\).
Then
\[
Y \text{ is of general type }
\quad \Longleftrightarrow \quad
\kappa(X) = \kappa(F) + \dim Y .
\]
\end{corollary}

We also explain the following classic result of Viehweg--Zuo from our perspective.

\begin{corollary}[=Theorem \ref{VZ}] \cite[Theorem 0.2]{VZ01}
Let \(X\) be a complex projective manifold of non\text{-}negative Kodaira dimension. Then any surjective morphism \(f: X \to \mathbb{P}^1\) has at least three singular fibres.
\end{corollary}

In the last part of this paper, we will show how the Ohsawa--Takegoshi theorem can be used to study the existence of sections of the canonical bundle in an algebraic fiber space.

\vspace{1em}

 \noindent\textbf{Acknowledgement}: The author would like to thank Professors Shigeharu Takayama, Shin-ichi Matsumura, and Masataka Iwai for many helpful discussions.
The author is partially supported by National Key R\&D Program of China 2024YFA1014800 and NSFC No.12401073.


\section{Preliminaries} \label{first-section}
In this section, we first introduce some basic notations and definitions in complex geometry. The standard reference is \cite{Dem12a, Dem12b}.

\subsection{Positive metrics} 
Let $X$ be a compact K\"ahler manifold, and $L$ be a line bundle on $X$. We study $L$ from the viewpoint of metrics and curvatures.
\begin{definition}
\begin{enumerate}
        \item The line bundle $L$ is positive in the sense of curvature if it carries a Hermitian metric $h_L$ such that its curvature form $\xu \Theta_{h_L}(L):= -\xu \pa \dbar \log h$ is a K\"ahler form, i.e., positive definite real $(1,1)$-form.
        \item The line bundle $L$ is semi-positive if it carries a Hermitian metric $h_L$ such that its curvature form $\xu \Theta_{h_L}(L)$ is semi-positive definite $(1,1)$-form.
\end{enumerate}
\end{definition}

Due to the famous Kodaira embedding theorem, positivity $(1)$ and ampleness are equivalent for smooth projective varieties. When comparing the positivity of two line bundles $A$ and $B$, we write $ A\geq B$ if $A-B:=A\otimes B^{-1}$ is a semi-positive line bundle. Although the metrics mentioned are smooth, many applications require us to address cases where the metric of the line bundle is singular.

\begin{definition}
Let $(F,h)$ be a holomorphic line bundle on a complex manifold $X$ endowed with a possible singular Hermitian metric $h$. For any given trivialization $\theta: F|_{\Omega} \simeq \Omega \times \mathbb{C}$ by
$$ \| \xi \|_h = |\theta(\xi)| e^{-\phi(x)},  \quad x \in \Omega, \xi \in F_x,
$$
where $\phi \in L^1_{loc}(\Omega)$ is a function, called the weight of the metric. The curvature $\sqrt{-1} \Theta_h(F)$ of $h$ is defined by
$$  \sqrt{-1} \Theta_h(F) = \sqrt{-1} \partial \overline{\partial} \phi.
$$
The Levi form $\sqrt{-1}\partial\bar{\partial} \phi$ is understood in the sense of distributions, meaning that the curvature is a $(1,1)$-current rather than necessarily being a smooth $(1,1)$-form. This curvature is globally defined on $X$ and remains independent of the choice of trivializations. 
\end{definition}

\begin{proposition} \label{pseff metric}
   When \(X\) is a smooth projective variety and \(L\) is a Cartier divisor (or line bundle), the following two definitions of pseudoeffectiveness and bigness are equivalent:
\begin{itemize}
    \item \(L\) is big if its Iitaka dimension satisfies \(\kappa(X,L) = \dim X\).
    \item \(L\) is pseudo-effective if it lies in the closure of the cone of effective divisors in the Néron--Severi space \(N^1(X)\). In other words, \(L\) is pseudo-effective if it can be approximated arbitrarily closely by effective divisors.
    \item \(L\) is big if there exists a singular Hermitian metric \(h\) on \(L\) such that its curvature satisfies \(\Theta_{(L,h)} \geq \varepsilon \omega\) in the sense of currents, for some \(\varepsilon > 0\).
    \item \(L\) is pseudo-effective if there exists a singular Hermitian metric \(h\) on \(L\) such that \(\Theta_{(L,h)} \geq 0\) in the sense of currents.
\end{itemize}
\end{proposition}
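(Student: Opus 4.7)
The plan is to separately establish the two equivalences (bigness and pseudo-effectiveness), taking the bigness one as the technical core and reducing pseudo-effectiveness to a limiting argument under perturbation by an ample class. In each case the algebraic-to-analytic direction is obtained by attaching canonical singular metrics to defining sections of effective divisors, while the reverse direction requires the $L^2$ theory of singular Hermitian metrics.

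For the bigness equivalence, I would handle the algebraic-to-analytic direction via Kodaira's lemma: since $\kappa(X,L) = \dim X$, there exists $m \gg 0$ such that $mL \sim A + E$ with $A$ ample and $E$ effective. Fixing a smooth positively curved metric $h_A$ on $A$ with $\Theta_{h_A} \geq \omega$ and the canonical singular metric $h_E = |s_E|^{-2}$ on $\mathcal{O}_X(E)$ (whose curvature current is $[E]$), the product $h_A \otimes h_E$ is a singular metric on $mL$ of curvature $\omega + [E] \geq \omega$. Taking its $m$-th root yields a singular metric on $L$ with $\Theta_{(L,h)} \geq \tfrac{1}{m}\omega$, so one may take $\varepsilon = 1/m$. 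The converse is the main technical ingredient, and I would appeal to Demailly's analytic characterization of bigness: starting from $\Theta_{(L,h)} \geq \varepsilon \omega$, the combination of Demailly's regularization of $h$ by metrics with analytic singularities together with the holomorphic Morse inequalities produces the asymptotic lower bound $h^0(X, mL) \geq c\, m^{\dim X}$, hence $\kappa(X,L) = \dim X$. This step is the main obstacle, absorbing the full transcendental content of the equivalence; an alternative route would be to combine Nadel vanishing with the Ohsawa--Takegoshi extension theorem applied at general points to construct enough sections.

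For the pseudo-effectiveness equivalence, I would reduce both directions to bigness by perturbation. Fix an ample class $A$ with a smooth positively curved metric $h_A$ satisfying $\Theta_{h_A} \geq \omega$. If $L$ is algebraically pseudo-effective, then $L + \tfrac{1}{k}A$ is big for every $k \geq 1$, so by the bigness equivalence it carries a singular metric with non-negative curvature; the associated currents represent $c_1(L) + \tfrac{1}{k}c_1(A)$ and have uniformly bounded mass against $\omega$, so weak compactness of closed positive $(1,1)$-currents extracts a limit $T \geq 0$ in $c_1(L)$, which corresponds to the desired positively curved singular metric on $L$. Conversely, given a positive singular metric $h$ on $L$, the tensor $h \otimes h_A^{1/k}$ endows $L + \tfrac{1}{k}A$ with a strictly positive singular metric; by the analytic-to-algebraic direction for bigness, $L + \tfrac{1}{k}A$ is big and in particular $\mathbb{Q}$-effective, so letting $k \to \infty$ expresses $L$ as a limit of effective classes in $N^1(X)_{\mathbb{R}}$.
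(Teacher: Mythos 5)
The paper states this proposition as standard background without offering a proof (it is the content of Demailly's analytic characterization of bigness and pseudo-effectiveness, cf.\ \cite{Dem12a}), so there is no argument in the paper to compare yours against. Your proof is correct and follows the canonical route: the algebraic-to-analytic direction for bigness via Kodaira's lemma with the metric $\left(h_A \otimes |s_E|^{-2}\right)^{1/m}$, the converse via Demailly regularization into analytic singularities followed by holomorphic Morse inequalities (or, as you note, Nadel vanishing together with Ohsawa--Takegoshi to produce sections), and the pseudo-effective case by perturbing with $\tfrac{1}{k}A$ in both directions. The only point worth making fully explicit in the compactness step is that the masses $\int_X T_k \wedge \omega^{n-1}$ are cohomological, namely $\bigl(c_1(L)+\tfrac{1}{k}c_1(A)\bigr)\cdot[\omega]^{n-1}$, hence uniformly bounded, and that weak limits preserve both positivity and the cohomology class so the limit current represents $c_1(L)$; you assert this correctly but it is the load-bearing step. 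One minor quantitative slip: in the algebraic-to-analytic bigness direction, after taking the $m$-th root the curvature bound is $\tfrac{1}{m}\Theta_{h_A} + \tfrac{1}{m}[E]$, so $\varepsilon$ should be $\tfrac{1}{m}$ times whatever constant bounds $\Theta_{h_A}$ below against $\omega$ rather than exactly $\tfrac{1}{m}$, but that is cosmetic since $h_A$ was chosen with $\Theta_{h_A}\geq\omega$.
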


A line bundle is usually called pseudoeffective if it admits a Hermitian metric whose weight function is plurisubharmonic.

\begin{definition} 
A function $u: \Omega \rightarrow [-\infty, \infty)$ defined on a open subset $\Omega \in \mathbb{C}^n$ is called plurisubharmonic (\text{Psh}, for short) if
\begin{enumerate}
\item $u$ is upper semi-continuous, this means that for every point \( z \in \Omega \),
\[
u(z) \geq \limsup_{w \to z} u(w).
\]
\item for every complex line $Q \subset \mathbb{C}^n$, $u|_{\Omega \cap Q}$ is subharmonic on $\Omega \cap Q$. That is, $u$ satisfies the mean value inequality. The latter property can be reformulated as follows: for all $a \in \Omega$, $\xi \in \mathbb{C}^n$ with $|\xi| = 1$, and $r > 0$ such that $\overline{B}(a, r) \subset \Omega$,
\begin{equation*}
u(a) \leq \frac{1}{2\pi} \int_0^{2\pi} u(a + re^{i\theta} \xi) \, d\theta. 
\end{equation*}
\end{enumerate}

A quasi-plurisubharmonic (quasi-psh, for short) function is a function $v$  that can be locally expressed as the sum of a Psh function and a smooth function. Let $\phi$ be a quasi-psh function on a complex manifold $X$, its multiplier ideal sheaf $\mathcal{I}(\phi) \subset \mathcal{O}_X$ is defined by
$$ \Gamma(U, \mathcal{I}(\phi)) = \{f \in \mathcal{O}_X(U): |f|^2e^{-\phi} \in L^1_{loc}(U) \}
$$
for every open set $U \subset X$. For a line bundle $(F, h)$, if the local weight of metric $h$ is $\phi$, we denote the multiplier ideal sheaf interchangeably by $\mathcal{I}(\phi)$ or $\mathcal{I}(h)$.
\end{definition}

For example, for any holomorphic functions $g, g_i$ on $\Omega$, both $\log|g|^{2\alpha}$ and $\log(\sum|g_i|^{2\alpha_i})$ are psh functions on $\Omega$. The multiplier ideal sheaf $\mathcal{I}(\log|g|^{2\alpha})= \mathcal{O}_{\Omega}$ if the positive number $\alpha$ sufficiently small.
The concept of the Lelong number described below characterizes the singularities of functions or, more generally, of currents.

\subsection{Extend sigular metric}
For our purpose in the following sections, we now introduce positivity notions for singular Hermitian metrics for vector bundles (in this paper, we only use the line bundle case).
Let $H_r$ be the space of semi-positive, possibly unbounded Hermitian forms on $\mathbb{C}^r$. A singular Hermitian metric $h$ on a vector bundle $E$ is a measurable map from $X$ to $H_r$ such that $h(x)$ is finite and positive definite almost everywhere. In particular, we require $0< \det h < +\infty$ almost everywhere.

At first, let \( u \) be an arbitrary holomorphic section of \( E \) with smooth Hermitian metric $h$. Then a short computation yields
\begin{equation*}
\xu \partial\bar{\partial}\|u\|_{h}^{2} = -\langle \xu\Theta u,u\rangle_{h} + \xu\langle D^{\prime}u,D^{\prime}u\rangle_{h} \geq -\langle \xu\Theta u,u\rangle_{h}.
\end{equation*}
Hence, we see that if the curvature is negative in the sense of Griffiths, then \( \|u\|_{h}^{2} \) is plurisubharmonic. On the other hand, we can always find a holomorphic section \( u \) such that \( D^{\prime}u = 0 \) at a point. Thus \( h \) is negatively curved in the sense of Griffiths if and only if \( \|u\|_{h}^{2} \) is plurisubharmonic for any holomorphic section \( u \).

The Griffiths negativity of the metric $h$ can be characterized equivalently through plurisubharmonicity conditions on \( \log \|u\|_{h}^{2} \). To see this, observe that the property can be deduced from a fundamental fact about plurisubharmonic functions: for any positive-valued function $v$, the logarithm $\log v$ is plurisubharmonic if and only if $v e^{2\,\mathrm{Re}\,q}$ is plurisubharmonic for every polynomial $q$. Applying this to $v = \|u\|_{h}^2$, we note that
\(
\|u\|_{h}^2 e^{2\,\mathrm{Re}\,q} = \|u e^{q}\|_{h}^2,
\)
where $u e^{q}$ remains a holomorphic section since $q$ is a polynomial. By assumption, $\|u e^{q}\|_{h}^2$ is plurisubharmonic (as $h$ is negatively curved), which implies $\log |u\|_{h}^2$ satisfies the plurisubharmonicity condition.

From this perspective, one can generalize the concept of Griffiths positivive curvature to the setting of holomorphic vector bundles endowed with singular metrics.
\begin{definition} \cite{BP08, PT18}\label{def Grif semi-posi as sing}
Let $(E,h)$ be a singular Hermitian metric on $X$, then $(E,h)$ is said to be:
\begin{enumerate}
		\item \textit{semi-negative curved} if $ \|s\|^2_h$ (or $\log \|s\|^2_h$) is psh for any local holomorphic section $s$ of $E$.
		\item \textit{semi-positive curved} if the dual metric $h^\star$ on $E^\star$ is semi-negative curved.
	\end{enumerate}
\end{definition}

If $E$ is a line bundle with a singular Hermitian metric $h$, the semi-positive curved condition in the above definition and pseudo-effective condition in Proposition \ref{pseff metric} coincide.

In the proof of the main theorem, we first construct a metric on a certain Zariski open subset. The following proposition provides a criterion for extending this metric from the Zariski open subset to the entire space.

 \begin{proposition}  \cite[Proposition $2.4$]{Tak24} \label{Tak24}
Let \( X_0 \subset X \) be a non-empty analytic Zariski open subset of the complex manifold $X$. Suppose \( E{|_{X_0}} \) admits a singular Hermitian metric \( h^\circ \) with Griffiths semi-negative curvature. Suppose further that for any open subset \( U \subset X \) and any \( s \in H^0(U, E) \), the psh function \(\log \|s\|_{h^\circ}^2\) on \( U \cap X_0 \) extends to a psh function on \( U \). (This extension condition is automatically satisfied if \(\operatorname{codim}(X \setminus X_0) \geq 2\), by virtue of the Hartogs type extension of psh functions.) Then \( h^\circ \) extends uniquely to a singular Hermitian metric on \( E \) with Griffiths semi-negative curvature.
\end{proposition}

\section{Main theorem}
In this section, we state the main theorem and defer its proof to the next section. We then recall some facts on the complex structure and positivity of the moduli divisor.

\subsection{Setup II}
 Under the assumptions in Setup I from the introduction, let \( D = D_+ - D_- \), where the positive part is given by \( D_+ = \sum_{d_i > 0} d_i P_i \) and the negative part by \( D_- = \sum_{d_i < 0} (-d_i) P_i \), with no common components. Moreover, we assume that
\begin{equation*}
    \mathrm{Codim}_Y \, f(\mathrm{Supp}\, D_+) \geq 2.
\end{equation*}
In particular, the contribution from \(D_+ \) can be ignored for our purposes.

We further decompose the negative part \( D_- \) into its \( f \)-horizontal and \( f \)-vertical components:
\[
D_- = D_-^h + D_-^v.
\]
Without loss of generality, we rewrite the canonical bundle formula as
\begin{equation} \label{first}
    K_X + D = K_X - D_-^h - D_-^v \sim_{\mathbb{Q}} f^*(K_Y + M_Y + B_D).
\end{equation}
This relation holds on a Zariski open subset of \( X \) whose complement has codimension at least two. In what follows, we analyze the vertical part \( D_-^v = \sum (-d_i) P_i \), where \( d_i < 0 \), in greater detail. Let
\begin{align*}
&f^* Q_j = \sum_i a_{ji} P_i \quad \text{with } a_{ji} \in \mathbb{Z}_{> 0},\\
&c_i = -\frac{d_i}{a_{ji}} \quad  \text{ if } f(P_i) = Q_j, \text{ here } d_{i} < 0,\\
&\gamma_j = \max \{ c_i | \ f(P_i) = Q_j \}.
\end{align*}
We set $C_D = \sum_j \gamma_j  Q_j$, note that
\begin{equation} \label{ej}
B_D + C_D = \sum e_j Q_j
\end{equation}
is an effective divisor.
Indeed, each coefficient \( e_j \) satisfies the inequality
$$
e_j=\beta_j+\gamma_j \geq \frac{d_i+a_{ji}-1}{a_{ji}} + \frac{-d_i}{a_{ji}}\geq \frac{a_{ji}-1}{a_{ji}}\geq 0.
$$
where \( d_i < 0 \) for the components of the vertical part \( D^v_{-} \).
We can also write the canonical bundle formula as:
\begin{equation} \label{second}
    K_X - D^h_{-} + f^*C_D - D^v_{-} \sim_{\mathbb{Q}} f^*(K_Y + M_Y + B_D + C_D).
\end{equation}
Note that $f^*C_D - D^v_{-}= f^*C_D + \sum d_i P_i$ is an effective $\mathbb{Q}$ divisor. We now state the main theorem of this section.
\begin{theorem} \label{claim}
  Assume \textbf{Setup II}. If the canonical divisor \( K_X \) is pseudo-effective, then so is the divisor \( K_Y + M_Y + B_D + C_D \) on \( Y \).
\end{theorem}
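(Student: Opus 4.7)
My plan is to exhibit a singular Hermitian metric on the $\mathbb{Q}$-line bundle $L := K_Y + M_Y + B_D + C_D$ whose curvature is semi-positive. I will construct the metric first over the smooth locus $Y_0 := Y \setminus Q$ of the fibration by fiber integration of the pseudo-effective metric on $K_X$, and then extend it across the branch divisor $Q$ using the canonical metric on $B_D + C_D$, via Proposition \ref{Tak24}.

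Fix a singular semi-positive metric $h_X = e^{-\phi_X}$ on $K_X$, which exists by hypothesis. Choose a sufficiently divisible $m \geq 1$ so that $m(K_X + D) \sim f^* m(K_Y + M_Y + B_D)$ is an integral linear equivalence and (by enlarging $m$ if needed) so that $\mathcal{I}(h_X^{1/m}) = \mathcal{O}_X$ and the same triviality holds along a generic fiber. Set $Z := f(\operatorname{Supp} D_+) \subset Y$, which has codimension $\geq 2$ by Setup II. Over $Y_0' := Y_0 \setminus Z$, the divisors $B_D$, $D_+|_{f^{-1}(Y_0')}$, and $D^v_-|_{f^{-1}(Y_0')}$ all vanish, so the canonical bundle formula simplifies to $m K_X - m D^h_- \sim f^* m(K_Y + M_Y)$. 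For a local frame $\tau$ of $m(K_Y+M_Y)$ on $U \subset Y_0'$, the tensor $f^*\tau \otimes s_{D^h_-}^m$ (with $s_{D^h_-}$ the canonical section of $\mathcal{O}_X(D^h_-)$) is a local section of $m K_X$ on $f^{-1}(U)$, restricting on a general fiber to a section of $m K_{F_y}$ via the trivial normal bundle identification of Remark~\ref{triv-nor}. I define the local weight of a metric on $K_Y + M_Y$ by
\[
\phi_M(y) := -\frac{1}{m}\log \int_{F_y} \bigl|f^*\tau \otimes s_{D^h_-}^m\bigr|^{2/m}\, e^{-\phi_X},
\]
and invoke the Berndtsson–Păun–Takayama theorem on the plurisubharmonic variation of relative $L^{2/m}$-norms (cf.\ \cite{BP08, PT18}) to conclude that $\phi_M$ is psh on $U$. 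Because $\operatorname{codim}_Y Z \geq 2$, the standard Hartogs-type extension promotes this to a psh weight on all of $Y_0$, yielding a singular Hermitian metric on $(K_Y+M_Y)|_{Y_0}$ with semi-positive curvature.

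Next, I tensor with the canonical singular metric $|s_{B_D+C_D}|^{-2}$ on $\mathcal{O}_Y(B_D+C_D)$ to obtain a singular Hermitian metric on $L$ over $Y_0$ whose local weight is $\phi_L = \phi_M - \log|s_{B_D+C_D}|^2$, with curvature $dd^c\phi_M + [B_D+C_D] \geq 0$ on $Y_0$. To extend this metric across $Q$ with semi-positive curvature, I apply Proposition \ref{Tak24}: the task reduces to checking that for any local section $\sigma$ of $L$ on a neighborhood $V$ of $q \in Q$, the psh function $\log\|\sigma\|^2_{h_{\phi_L}}$ on $V \cap Y_0$ extends as a psh function across $V \cap Q$. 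This amounts to showing that the singularity of $\phi_M(y)$ as $y \to Q_j$ is bounded above by $e_j\log|q_j|^{-2} = (\beta_j + \gamma_j)\log|q_j|^{-2}$, so that the canonical weight of $B_D+C_D$ precisely absorbs it.

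The main obstacle is this asymptotic analysis of $\phi_M$ near the components $Q_j$ of the branch divisor. The boundary coefficient $\beta_j$ is designed via log-canonical threshold computations to capture the worst logarithmic singularity arising when $D^h_-$ meets the degenerate fibers, while $\gamma_j$ absorbs the vertical negative contribution of $D^v_-$ through $f^*C_D$. The verification requires a local model computation near each $Q_j$ using the SNC structure $f^*Q_j = \sum_i a_{ji} P_i$ with $\bar d_i = (d_i + a_{ji}-1)/a_{ji}$ and $c_i = -d_i/a_{ji}$, to show that the blow-up rate of the fiber integral is at most $(\beta_j + \gamma_j)\log|q_j|^{-2}$. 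Once this estimate is in place, $\phi_L$ extends globally as a psh weight on $L$, proving that $K_Y + M_Y + B_D + C_D$ is pseudo-effective.
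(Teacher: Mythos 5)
Your overall strategy mirrors the paper's two-part architecture (construct a semi-positively curved metric over $Y_0 = Y\setminus Q$ by fiber integration, then absorb the singularity near $Q$ using the canonical weight on $B_D + C_D$ and invoke Proposition~\ref{Tak24}), so the plan is sound. But the two steps you sketch each leave the real work undone, and the second step in particular is the heart of the theorem.

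On the construction over $Y_0$: your formula $\phi_M(y) = -\tfrac{1}{m}\log\int_{F_y}\bigl|f^*\tau\otimes s_{D^h_-}^m\bigr|^{2/m} e^{-\phi_X}$ is not obviously well-typed. If $u := f^*\tau\otimes s_{D^h_-}^m$ is treated as a section of $mK_X$, then $|u|^{2/m}$ is already a density; multiplying further by $e^{-\phi_X}$ (a weight with opposite transformation behavior) leaves an object that is not naturally an $(n,n)$-form on $F_y$ unless you specify a reference volume. More seriously, the positivity you want does not come for free from ``BP--PT plurisubharmonic variation of $L^{2/m}$-norms''; what \cite{BP08,PT18} give directly is the positivity of the \emph{Bergman kernel} (sup-type) metric on $mK_{X/Y}+L$, and the behavior of integral norms requires a separate argument. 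The paper avoids this by using Takayama's Bergman-type metric $\widetilde h$ on $K_{X/Y}+D\sim_{\mathbb Q} f^*M_Y$ (which is constant along fibers, equal to $F_i(y)$) and then \emph{directly} proving the sub-mean-value inequality for $\int_{X/Y}\Psi_X\cdot\widetilde h^{-1}$ via the transversely holomorphic trivialization: the slices $\{x_0\}\times\Delta$ are holomorphic submanifolds of $X$, the integrand is a product of psh functions and a positive volume form, and Fubini then yields the inequality with no external theorem invoked. If you want to proceed along your $L^{2/m}$ route you would need to either reduce to that argument or cite a precise result covering it.

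On the extension across $Q$: you correctly identify that the key point is to show the singularity of $\phi_M$ near $Q_j$ is no worse than $e_j\log|q_j|^{-2}$ with $e_j=\beta_j+\gamma_j$, but you then write ``the verification requires a local model computation near each $Q_j$'' and leave it there. This verification is precisely where the proof lives. The paper's proof (Steps~1--4 of the second part) is built around an essential ingredient you do not mention: the Abramovich--Karu toroidal resolution (Theorem~\ref{thm:toroidal_resolution}), which makes $f$ toroidal and equidimensional so the local model $t_j=\mathbf{x}_j^{\mathbf{a}_j}y_j^{b_j}$ holds and the fiber integrals can actually be estimated. With that, the paper compares $\Psi_X\cdot|s_E|^2\sim|y/t|^2\prod_j|t_j|^{2e_j}$ with the integrand $g\,c_n\,\sigma\wedge\bar\sigma\sim|D^v_-\cdot y/t|^2\prod|d\mathbf x_j|^2\cdot|D^h_-|^2|d\mathbf x_{m+1}|^2$ that defines $\widetilde h=F_i(y)$, and uses $f^*(\gamma_j Q_j)\geq D^v_-$ together with the bound $c_1\leq F_i\leq c_2\prod(-\log|t_j|)^n$ to conclude that $\int_{X_y}\Psi_X\cdot\widetilde h^{-1}\cdot|s_E|^2$ stays bounded as $y\to Q$. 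Without working through this coordinate estimate (and without passing to the toroidal model that makes it tractable), the assertion that $\beta_j+\gamma_j$ ``precisely absorbs'' the singularity is a conjecture rather than a proof. In short: the skeleton is right, but both the psh-ness of the fiber integral and the boundedness near $Q$ need to be established, and the latter is where the nontrivial content of the theorem sits.
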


\subsection{Complex structure}
It is well known that the moduli divisor \( M_Y \) in \eqref{first} reflects the variation of complex structures among the fibers. To make this precise, we recall some basic facts about the deformation theory of complex structures.
In this subsection, let \( Z \) be a fixed compact complex manifold of dimension \( n \), and let \( \Delta \subset \mathbb{C}^m \) be a small open ball centered at the origin. A family of deformations of \( Z \) over \( \Delta \) is a proper, holomorphic submersion
\[
\pi: \mathcal{Z} \to \Delta,
\]
where \( \mathcal{Z} \) is a complex manifold and the central fiber is \( Z = \pi^{-1}(0) \). Each fiber
\[
\mathcal{Z}_t := \pi^{-1}(t), \quad t \in \Delta,
\]
is a compact complex manifold diffeomorphic to \( Z \), but typically equipped with a different complex structure. The reader should think of the fibers \( \mathcal{Z}_t \) as small deformations of the central fiber \( Z \), capturing how the complex structure varies in families.

\begin{remark}
    A smooth family $f \colon M \to B$ is called trivial if it is (biholomorphically) isomorphic to the product $M_b \times B \to B$ for some (and hence all) $b \in B$. It is called locally trivial if there exists an open covering $B = \bigcup U_\alpha$ such that every restriction $f \colon f^{-1}(U_\alpha) \to U_\alpha$ is trivial.

  \noindent The family is locally trivial if and only if a certain morphism $\mathcal{KS}$ of sheaves over $B$ is trivial, while the restriction of $\mathcal{KS}$ at a point $b \in B$ is a linear map
\[
\mathrm{KS} \colon T_{b}B \to H^1(M_b, T M_b),
\]
called the Kodaira-Spencer map, which can be interpreted as the first derivative at the point $b$ of the map
\[
B \to \{\text{isomorphism classes of complex manifolds}\}, \quad b \mapsto M_b.
\]
On the other hand, recall the Fischer--Grauert theorem states:  
Let \( f \colon M \to B \) be a holomorphic family of compact complex manifolds.  
Then all fibers of \( f \) are biholomorphic if and only if \( f \) is locally trivial.

\end{remark}

Back to the family $\pi$. By Ehresmann’s fibration theorem, we have \( \mathcal{Z} \simeq Z \times \Delta \) as differentiable manifolds (but not, in general, as complex manifolds). However, it can be shown that any family \( \mathcal{Z} \to \Delta \) admits a transversely holomorphic trivialization; (see \cite[Theorem 4.31]{Man04} for example):
\begin{equation} \label{holo trivi}
(p, \pi): \mathcal{Z} \to Z \times \Delta.
\end{equation}
In other words, one can always find a map \( p: \mathcal{Z} \to Z \) (typically not holomorphic) such that:
\begin{enumerate}
    \item \( (p, \pi) \) is a diffeomorphism;
    \item \( p \) restricts to the identity on the central fiber \( \pi^{-1}(0) = Z \); and
    \item the fibers of \( p \) are holomorphic submanifolds of \( \mathcal{Z} \), transverse to \( Z \).
\end{enumerate}
The reader will note that any fiber of \( p \) is biholomorphic to \( \Delta \) via the holomorphic map \( \pi \).

Let $Z$ be a compact manifold with tangent bundle $TZ$. Fix an almost complex structure $J \in \mathrm{End}(TZ)$, which induces the canonical decomposition
\[
TZ \otimes \mathbb{C} = TZ^{1,0} \oplus TZ^{0,1}
\]
into $(1,0)$ and $(0,1)$ subbundles, with corresponding projection maps $\pi_{1,0}$ and $\pi_{0,1}$.

Now consider a deformation $J'$ of $J$ (viewed as a nearby almost complex structure). When $J'$ is sufficiently close to $J$, the restriction of $\pi_{0,1}$ to $TZ'^{0,1}$ (the $(0,1)$-subbundle of $J'$) becomes an isomorphism, yielding a canonical composition:
\[
\begin{tikzcd}
TZ^{0,1} \arrow[r, "\sim", "(\pi_{0,1}|)^{-1}"'] & TZ'^{0,1} \arrow[r, "\pi_{1,0}"] & TZ^{1,0}.
\end{tikzcd}
\]
This composite map naturally identifies with a section $\xi \in A^{0,1}(TZ^{1,0})$, i.e., a $TZ^{1,0}$-valued $(0,1)$-form on $Z$. 
Remarkably, this correspondence works both ways: every sufficiently small $\xi \in A^{0,1}(TZ^{1,0})$ determines a deformation $J'$ by specifying its $(0,1)$-subbundle as the graph of $\xi$ in $TZ \otimes \mathbb{C}$.

When expressed in local coordinates, this becomes particularly concrete. Let us assume the fixed almost complex structure $J$ is integrable, with local holomorphic coordinates $(z^1,\ldots,z^n)$. In these coordinates, the deformation tensor $\xi \in A^{0,1}(TZ^{1,0})$ takes the explicit form:

\[
\xi = \sum_{i,u} h^u_i(z) \, d\bar{z}^i \otimes \frac{\partial}{\partial z^u},
\]
where each coefficient $h^u_i(z)$ is a smooth function of the coordinates. 
The deformed subbundle $TZ'^{0,1}$ is locally generated by the frame:
\[
\left\{
\frac{\partial}{\partial \bar{z}^i} + \sum_{u=1}^n h^u_i(z) \frac{\partial}{\partial z^u} \right\}_{i=1}^n,
\]
which visibly represents a tilting of the original anti-holomorphic directions by holomorphic vector field components.

Next, we analyze the family case $\mathcal{Z} \simeq Z \times \Delta$, enabled by the transversely holomorphic trivialization. For computational convenience, we formulate the setup in local coordinates. Let \( z^1, \ldots, z^n \) be local holomorphic coordinates on \( Z \), and let \( t^1, \ldots, t^m \) be holomorphic coordinates on the disk \( \Delta \). Of course, \( z^1, \ldots, z^n, t^1, \ldots, t^m \) are not generally holomorphic coordinates for the deformed complex structure \( J' \); however, together with their complex conjugates, they provide a smooth coordinate system that can be used to describe vector fields and differential forms on \( Z \times \Delta \).

The deformation \( \xi(t) \in A^{0,1}(TZ^{1,0}) \) may then be written in the form
\[
\xi(t) = \sum_{i, u} h_i^u(z, t) \, d\overline{z}^i \otimes \frac{\partial}{\partial z^u},
\]
where \( h_i^u(z, 0) = 0 \). At each point \( (z, t) \in Z \times \Delta \), the corresponding \( (0,1) \)-subspace for \( J' \) is spanned by the \( m + n \) vector fields
\[
\frac{\partial}{\partial \overline{z}^i} + \sum_u h_i^u(z, t) \frac{\partial}{\partial z^u} \quad \text{and} \quad \frac{\partial}{\partial \overline{t}^k} \qquad (1 \leq i \leq n,\; 1 \leq k \leq m).
\]
The one-forms that vanish on this basis are precisely
\begin{equation} \label{complex str}
dz^u - \sum_i h_i^u(z,t) \, d\overline{z}^i \quad \text{and} \quad dt^k \qquad (1 \leq u \leq n,\; 1 \leq k \leq m).
\end{equation}

In summary, we have a transversely holomorphic trivialization such that the pullback of the local holomorphic $1$-form $dz^u$ on $Z \times \Delta$ to $\mathcal{Z}$ is given by
\[
dz^u - \sum_i h_i^u(z,t) \, d\overline{z}^i,
\]
while the forms $dt^k$ remain unchanged.

\subsection{Moduli divisor in canonical bundle formula}
Next, we require a semi-positive metric on the moduli part \( M_Y \) in~\eqref{first} that possesses suitable analytic properties. This metric was constructed by Takayama in \cite{Tak22}. For completeness, we briefly recall the construction below. By replacing the divisor \( D \) with \( D - f^*B_D \), we may assume for the moment that \( B_D = 0 \). This simplification allows us to focus on the behavior of the moduli divisor \( M \) without the contribution from the boundary part.

Recall that under the conditions of Setup~II, we need only consider the negative part of $D$. We therefore assume $D = -D_{-}$ and define
\[
\Delta_D := \lceil D_{-} \rceil - D_{-}.
\]
where $\lceil \cdot\rceil$ denotes the round-up of $\mathbb{Q}$-divisors.
 We have
\begin{equation*}
K_{X/Y} + \Delta_D - \lceil D_{-} \rceil \sim_{\mathbb{Q}} f^*M_Y
\end{equation*}
in our setup. Define
\(
N := \lceil D_{-} \rceil - K_{X/Y},
\)
which is a line bundle on \( X \). Then we have the relations
\[
K_{X/Y} + N = \lceil D_{-} \rceil, \quad \text{and} \quad \Delta_D - N \sim_{\mathbb{Q}} f^*M_Y
\]
for the \( \mathbb{Q} \)-divisor \( M_Y \) on \( Y \).
We now equip the bundles \( K_{X/Y} \), \( N \), and \( \Delta_D \) with smooth Hermitian metrics \( h_K \), \( h_N \), and \( h_{\Delta_D} \), respectively. Then we define the closed \( (1,1) \)-form
\[
\theta := \frac{\sqrt{-1}}{2\pi} \left( \Theta_{h_K} + \Theta_{h_{\Delta_D}} \right) \in c_1(K_{X/Y} + \Delta_D) = c_1(K_{X/Y} + D + \lceil D_{-} \rceil).
\]

Associated to the effective \( \mathbb{Q} \)-divisor \( \Delta_D \), we define a canonical singular Hermitian metric
\begin{equation} \label{root}
g_\delta := \frac{1}{|s_{\Delta_D}|^2} = \frac{h_{\Delta_D}}{|s_{\Delta_D}|_{h_{\Delta_D}}^2},
\end{equation}
where \( s_{\Delta_D} \) is a section of \( \Delta_D \), and \( h_{\Delta_D} \) is a smooth Hermitian metric on the line bundle \( \mathcal{O}_X(\Delta_D) \). To clarify the construction of the metric \( g_\delta \), we provide some details. Choose a positive integer \( k \) such that \( k\Delta_D \) is an integer effective divisor. Then there exists a canonical global section \( s \in H^0(X, \mathcal{O}_X(k\Delta_D)) \) defining \( k\Delta_D \), and we may equip the corresponding line bundle \( \mathcal{O}_X(k\Delta_D) \) with a smooth Hermitian metric \( h \).
Although the \( k \)-th root of the section \( s \) is not globally well-defined, the \( k \)-th root of the metric \( h, |s|^2_h \) is well-defined. Thus, we define
\[
h_{\Delta_D} := h^{\frac{1}{k}}, \qquad \text{and} \qquad |s_{\Delta_D}|_{h_{\Delta_D}}^2 := |s|_h^{\frac{2}{k}}.
\]
The curvature current of \( g_\delta \) is given by
\[
\frac{\sqrt{-1}}{2\pi} \Theta_{g_\delta} = [\Delta_D] \geq 0,
\]
where \( [\Delta_D] \) denotes the current of integration along the divisor \( \Delta_D \).
Since the pair \( (X, \Delta_D) \) is klt, the associated multiplier ideal sheaf is trivial:
\(
\mathcal{I}(g_\delta) = \mathcal{O}_X.
\)
Moreover, for every fiber \( X_y := f^{-1}(y) \) with \( y \in Y \setminus Q \), the restriction satisfies
$\mathcal{I}(g_\delta|_{X_y}) = \mathcal{O}_{X_y}$.

Let \( \{Y_i\}_{i \in I} \) be a locally finite open cover of \( Y \) such that each \( Y_i \) is sufficiently small and biholomorphic to a ball in \( \mathbb{C}^m \), where \( m = \dim Y \). Denote \( X_i := f^{-1}(Y_i) \). We may assume that \( M_Y \sim_{\mathbb{Q}} 0 \) on each \( Y_i \), so that on every \( X_i \), we have
\[
N_i := N|_{X_i} \sim_{\mathbb{Q}} \Delta_D|_{X_i}, \quad \text{and hence} \quad K_{X/Y} + N_i \sim_{\mathbb{Q}} K_{X/Y} + \Delta_D|_{X_i}.
\]
A singular Hermitian metric on \( N_i \) (respectively, on \( K_{X/Y} + N_i \)) induces a singular Hermitian metric on the \( \mathbb{Q} \)-line bundle \( \Delta_D|_{X_i} \) (respectively, \( K_{X/Y} + \Delta_D|_{X_i} \)), and vice versa. Via the isomorphism \( N_i \sim_{\mathbb{Q}} \Delta_D|_{X_i} \), we define a smooth Hermitian metric
\begin{equation} \label{hi}
h_i := h_K \cdot h_{\Delta_D}|_{X_i}
\end{equation}
on the line bundle \( K_{X/Y} + N_i \), whose curvature form satisfies
\[
\frac{\sqrt{-1}}{2\pi} \Theta_{h_i} = \theta \quad \text{on } X_i.
\]
We also equip \( N_i \sim_{\mathbb{Q}} \Delta_D|_{X_i} \) with a singular Hermitian metric
\[
g_i := g_\delta|_{X_i}.
\]
This metric satisfies $\frac{\sqrt{-1}}{2\pi} \Theta_{g_i} = [\Delta_D|_{X_i}] \geq 0$,
and its multiplier ideal sheaf is trivial: \( \mathcal{I}(g_i) = \mathcal{O}_{X_i} \), as well as \( \mathcal{I}(g_i|_{X_y}) = \mathcal{O}_{X_y} \) for every \( y \in Y_i \setminus Q \).

We can see that \(\{h_i/g_i\}_{i \in I}\) gives a global singular Hermitian metric on \(K_{X/Y}\), i.e., \(h_i/g_i = h_j/g_j\) on \(X_i \cap X_j\). In fact, on each \(X_i\),
\[
\frac{h_i}{g_i} = h_K h_{\Delta_D} \left( \frac{h_{\Delta_D}}{|s_{\Delta_D}|^2_{h_{\Delta_D}}} \right)^{-1} = h_K |s_{\Delta_D}|^2_{h_{\Delta_D}}.
\]

We take a section \(\sigma = s_{\lceil D_{-}\rceil} \in H^0(X, \mathcal{O}_X(\lceil D_{-}\rceil))\) whose divisor is \(\lceil D_{-}\rceil\). As \(\mathcal{I}(g_i|_{X_y}) = \mathcal{O}_{X_y}\) for \(y \in Y \setminus Q\), by Ohsawa-Takegoshi type \(L^2\)-extension and the base change theorem, for every \(y \in Y \setminus Q\), we have \(h^0(X_y, K_{X_y} + N_i) = h^0(X_y, \lceil D_{-}\rceil) = 1\). We then can see \(H^0(X_y, K_{X/Y} + N_i) = \mathbb{C}\sigma_y\) with \(\sigma_y := s_{\lceil D_{-}\rceil}|_{X_y}\) for \(y \in Y_i \setminus Q\). Hence, for every \(y \in Y_i \setminus Q\), there exists \(\tau_y \in H^0(X_y, \lceil D_{-}\rceil)\) such that
\[
\int_{X_y} g_i c_n \tau_y \wedge \bar{\tau}_y = 1.
\]
Such \(\tau_y\) is unique up to a multiplication of a complex number of unit norm. In fact, letting
\begin{equation} \label{Fi}
F_i(y) = \int_{X_y} g_i c_n \sigma_y \wedge \bar{\sigma}_y \in (0, +\infty) \text{ and } \|\sigma_y\|_{g_i} = \sqrt{F_i(y)},
\end{equation}
we can take
\[
\tau_y := \frac{\sigma_y}{\|\sigma_y\|_{g_i}} \in H^0(X_i, K_{X/Y} + N_i),
\]
for \(y \in Y \setminus Q\). We see that \(F_i(y)\) and hence \(\|\sigma_y\|_{g_i}\) are \(C^\infty\)-smooth on \(Y \setminus Q\) by the fractional relative normal crossing property \(\Delta_D\) and the derivative under integration. We let \(\tau_i = \{\tau_y\}_{y \in Y \setminus Q} \in C^\infty(X_i \setminus f^{-1}(Q), K_{X/Y} + N_i)\) be a family of fiberwise holomorphic sections parametrized by \(Y_i \setminus Q\).

As shown by Takayama in \cite[Section $4$]{Tak22}, one can construct a Bergman kernel type metric $h_K \cdot h_{\Delta_D} \cdot e^{-\varphi}$ on \( K_{X/Y} + \Delta_D \). On each $X_i$, we have $\varphi= \varphi_i =\log |\tau_i|^2_{h_i}$. And its curvature current satisfies that \(\theta + dd^c \varphi  \geq  \lceil D_{-} \rceil\). Motivated by this, we consider a singular Hermitian metric \( \widetilde{h} \) on
\begin{equation} \label{moduli line}
K_{X/Y} + \Delta_D - \lceil D_{-} \rceil = K_{X/Y} + D \sim_{\mathbb{Q}} f^*M_Y,
\end{equation}
defined by
\begin{equation} \label{moduli metric}
\widetilde{h} := h_K \cdot h_{\Delta_D} \cdot e^{-\varphi} \cdot \frac{|s_{\lceil D_{-} \rceil}|^2_{h_K h_{\Delta_D}}}{h_K \cdot h_{\Delta_D}} = h_K \cdot h_{\Delta_D} \cdot e^{-\varphi} \cdot |s_{\lceil D_{-} \rceil}|^2,
\end{equation}
where \( \varphi \) is a globally defined function on \( X \). We note that a relation
\[
\frac{e^{\varphi_i}}{|s_{\lceil D_{-}\rceil}|^2_{h_i}}(x) = \frac{|s_{\lceil D_{-}\rceil}|^2_{h_i}}{F_i(f(x))} \cdot \frac{1}{|s_{\lceil D_{-}\rceil}|^2_{h_i}} = \frac{1}{F_i(f(x))}
\]
holds for any \( x \in X_i \setminus f^{-1}(Q) \).
Hence, on each local chart \( X_i \), combining \eqref{hi} we have $\widetilde{h}|_{X_i} = F_i$.

The curvature current is \(\Theta_{\widetilde{h}} = \theta + dd^c \varphi - \lceil D_{-} \rceil \geq 0\). Obviously, \(F_i\) is constant on each fiber (it can be \(\equiv +\infty\) on singular fibers), the induced metric on \(M_Y\) is \((f_* \widetilde{h})(y) = F_i(y) (= \|\sigma_y\|^2_{g_i})\) on each \(Y_i\). 

\section{The proof of the main theorem}
The proof proceeds in two parts: In the first part, we investigate the positivity properties of the metric of \( K_Y + M_Y + B_D + C_D \) over the smooth locus of \( f \). In the second part, we examine the asymptotic behavior of the metric near the boundary divisor \( Q \).
\subsection{Part $1$}

We begin by considering points \( y_0 \in Y_0:= Y \setminus Q \), where the morphism \( f \) is smooth. Let \( V \subset \mathbb{C}^m \) be a small open coordinate neighborhood (a polydisk or ball) centered at \( y_0 \), such that \( V \cap Q = \emptyset \). Set \( U := f^{-1}(V) \subset X \). 
Over this neighborhood, the morphism \( f: U\to V \) defines a smooth, proper submersion. By Ehresmann’s theorem, there exists a differentiable trivialization, and we may further assume the existence of a transversely holomorphic trivialization:
\[
U \simeq X_0 \times \Delta,
\]
where \( X_0 := f^{-1}(y_0) \) and \( \Delta \simeq V \) is a small ball in \( \mathbb{C}^m \). This identification allows us to treat \( U \) as a smooth family of complex manifolds parametrized by \( \Delta \), with varying complex structures in the transverse direction.

\begin{remark}
Let \( Z \) be a compact complex manifold of dimension \( n \). Its canonical bundle \( K_Z \) admits a local frame \( dz_1 \wedge \cdots \wedge dz_n \) on a coordinate chart \( \Omega_i \) with local holomorphic coordinates \( z_1, \ldots, z_n \). A holomorphic section of \( K_Z \) (if it exists) is given by a collection of holomorphic functions \( f_i \) defined on \( \Omega_i \) such that the forms \( f_i\, dz_1 \wedge \cdots \wedge dz_n \) glue together to define a global holomorphic \( n \)-form. From this perspective, we may regard
\[
c_n\, dz_1 \wedge \cdots \wedge dz_n \wedge d\overline{z}_1 \wedge \cdots \wedge d\overline{z}_n, \quad \text{where } c_n = (\sqrt{-1})^{n^2},
\]
as a Hermitian metric on \( K_Z \). On the other hand, given a holomorphic section \( f_i \), the expression \( \frac{1}{|f_i|^2} \) defines a singular Hermitian metric on \( K_Z \) with singularities along the zero locus of \( f_i \).

In general, any two Hermitian metrics \( h_1 \) and \( h_2 \) on the same holomorphic line bundle differ by a positive function, i.e.,
\[
h_1 = e^{-\varphi} h_2,
\]
where \( \varphi \) is a globally defined real-valued function.

Suppose the Hermitian metric on \( K_Z \) is locally expressed on a chart \( \Omega_i \subset Z \), with holomorphic coordinates \( z_1, \ldots, z_n \), as
\[
e^{-\varphi} \cdot c_n\, dz_1 \wedge \cdots \wedge dz_n \wedge d\overline{z}_1 \wedge \cdots \wedge d\overline{z}_n.
\]
We say that the local expression of the metric is \( e^{-\varphi} \).
Now, consider another overlapping chart \( \Omega_j \) with holomorphic coordinates \( w_1, \ldots, w_n \), such that \( \Omega_i \cap \Omega_j \neq \emptyset \). On the overlap, the change of coordinates gives
\[
dz_1 \wedge \cdots \wedge dz_n = \det\left( \frac{\partial z}{\partial w} \right) dw_1 \wedge \cdots \wedge dw_n.
\]
Therefore, the volume form (and hence the metric) transforms as
\[
e^{-\varphi} \cdot c_n\, dz_1 \wedge \cdots \wedge dz_n \wedge d\overline{z}_1 \wedge \cdots \wedge d\overline{z}_n 
= e^{-\varphi} \left| \det\left( \frac{\partial z}{\partial w} \right) \right|^2 \cdot c_n\, dw_1 \wedge \cdots \wedge dw_n \wedge d\overline{w}_1 \wedge \cdots \wedge d\overline{w}_n.
\]
Thus, the local expression of the metric in the \( w \)-coordinates becomes
\[
e^{-\varphi} \cdot \left| \det\left( \frac{\partial z}{\partial w} \right) \right|^2,
\]
and note that $ \det\left( \frac{\partial z}{\partial w} \right)$ is holomorphic.
\end{remark}

Returning to our setting, since \( K_X \) is assumed to be pseudoeffective, there exists a singular Hermitian metric \( h_1 = e^{-\psi_X} \) on \( K_X \), where \( \psi_X \) is a psh function. This means that the top-degree form
\begin{equation} \label{canonical metric}
\Psi_X := e^{\psi_X} \, dz_1 \wedge \cdots \wedge dz_n \wedge d\overline{z}_1 \wedge \cdots \wedge d\overline{z}_n \wedge dt_1 \wedge \cdots \wedge dt_m \wedge d\overline{t}_1 \wedge \cdots \wedge d\overline{t}_m
\end{equation}
is a globally defined \((n+m,n+m)\)-form on \( X \), where \( (z_1, \ldots, z_n; t_1, \ldots, t_m) \) are local coordinates in \( X  \). Since \( e^{\psi_X} \) is locally integrable due to the upper boundedness of psh functions, we can define the fiberwise integral
\[
\Psi_Y := \int_{X/Y} \Psi_X,
\]
which yields a globally defined \((m,m)\)-form on \( Y \). If we write this form locally as
\begin{equation*}
\Psi_Y = e^{\psi_Y} \, dt_1 \wedge \cdots \wedge dt_m \wedge d\overline{t}_1 \wedge \cdots \wedge d\overline{t}_m,
\end{equation*}
then we may interpret \( e^{\psi_Y} \) as defining the norm of a local section of \( -K_Y \) (or the metric of $-K_Y$). Our goal is to prove that the function $e^{\psi_Y}$ multiplied by certain metrics of $-M_Y$, $-B_D$, and $-C_D$ is psh. This would imply that: every local norm of any section of $-K_Y - M_Y - B_D - C_D$ is psh. Consequently, the bundle $K_Y + M_Y + B_D + C_D$ admits a Hermitian metric with semi-positive curvature in the sense of Definition~\ref{def Grif semi-posi as sing}.

\begin{remark}
For the reader’s convenience, we recall some basic facts concerning fiber integrals.
 We may regard the fiberwise integral as the pushforward of differential forms. 
Let $F \colon M_{1} \to M_{2}$ be a submersion between manifolds; that is, \(F\) is surjective and, for every \(x \in M_{1}\), the differential map 
\(
d_{x}F \colon T_{M_{1},x} \longrightarrow T_{M_{2},F(x)}
\)
is surjective. 
Let \(g\) be a differential form of degree \(q\) on \(M_{1}\), with \(L^{1}_{\mathrm{loc}}\) coefficients, such that the restriction 
$
F\big|_{\operatorname{Supp} g}
$
is proper. 
We claim that \(F_{*}g\) is the differential form of degree \(q - (m_{1} - m_{2})\) obtained from \(g\) by integrating along the fibers of \(F\), where \(m_{1} = \dim_{\mathbb{R}} M_{1}\) and \(m_{2} = \dim_{\mathbb{R}} M_{2}\). We denoted
\[
F_{*}g(y) = \int_{z \in F^{-1}(y)} g(z).
\]
This assertion is equivalent to the following generalized form of Fubini's theorem:
\[
\int_{M_{1}} g \wedge F^{*}u 
= \int_{y \in M_{2}} \left( \int_{z \in F^{-1}(y)} g(z) \right) \wedge u(y),
\quad \forall\, u \in {{}^{0}\mathscr{D}}^{m_{1}-q}(M_{2}).
\]
Using a partition of unity on \(M_{1}\) and the constant rank theorem, the verification of this formula is easily reduced to the case where 
\(M_{1} = A \times M_{2}\) and $F$ be the second projection. 
Let \(z = (x,y) \in A \times M_{2}\) 
be any point of \(M_{1}\), the above formula becomes
\[
\int_{A \times M_{2}} g(x,y) \wedge u(y) 
= \int_{y \in M_{2}} \left( \int_{x \in A} g(x,y) \right) \wedge u(y),
\]
where the direct image of \(g\) is computed from
\[
g = \sum_{|I| + |J| = q} g_{I,J}(x,y) \, dx_{I} \wedge dy_{J}
\]
by the formula
\begin{align*}
F_{\star}g(y) 
&= \int_{x \in A} g(x,y) \\
&= \sum_{|J| = q-r} \left( \int_{x \in A} g_{(1,\ldots,r),J}(x,y) 
\, dx_{1} \wedge \cdots \wedge dx_{r} \right) dy_{J}.
\end{align*}
In this situation, we see that \(F_{\star}g\) has \(L^{1}_{\mathrm{loc}}\) coefficients 
on \(M_{2}\) whenever \(g\) is \(L^{1}_{\mathrm{loc}}\) on \(M_{1}\).
\end{remark}

Back to our proof. The upper semi-continuity of \( e^{\psi_Y} \) follows by approximation: we take a decreasing sequence of smooth functions \( e^{\psi^i_X} \to e^{\psi_X} \), and use the smoothness of \( f \colon X \to Y \) over the smooth locus to pass to the limit under fiber integration. Recall that any upper semi-continuous function admits a decreasing approximation by smooth functions. Conversely, the limit of any decreasing sequence of continuous (or smooth) functions remains upper semi-continuous.

As we restrict to the open small ball \( V \subset Y \) where \( \mathrm{Supp}(B_D + C_D) \cap V = \emptyset \), it is enough to analyze the positivity properties of the bundle \( K_Y + M_Y \) over \( V \). Note that \( B_D + C_D \) is an effective \( \mathbb{Q} \)-divisor and is therefore pseudoeffective.

We now consider the fiber integral
\[
\Psi_{Y, M} := e^{\psi_{Y, M}} \, dt_1 \wedge \cdots \wedge dt_m \wedge d\overline{t}_1 \wedge \cdots \wedge d\overline{t}_m := \int_{X/Y} \Psi_X \cdot \frac{1}{\widetilde{h}}
\]
on the open set \( V \subset Y \), where \( \Psi_X = e^{\psi_X} \, dV_X \) (in \eqref{canonical metric}) is the \((n+m,n+m)\)-form on \( X \), and \( \widetilde{h} \) is the singular Hermitian metric on \( K_{X/Y} + D \sim_{\mathbb{Q}} f^*M_Y \) constructed earlier (see \eqref{moduli metric}). Note that the metric \( \widetilde{h} = F_i(y) \) is constant along each fiber \( X_y \), i.e., its restriction to \( X_y \) depends only on the base point \( y \in Y \). In contrast to \(\Psi_Y\), the form \(\Psi_{Y,M}\) is not a global \((m,m)\)-form on \(Y\); rather, it encodes the metric information of \(K_Y + M_Y\) that we need. Indeed, \( e^{-\psi_{Y,M}} \) may be regarded as the Hermitian metric of $K_Y +M_Y$.

We also invoke a key result from \cite[Theorem 3.1]{Tak22}, which states that the fiber integral \( F_i(y) \) admits a uniform positive lower bound on each open set \( Y_i \subset Y \). Consequently, the reciprocal \( \frac{1}{\widetilde{h}} = \frac{1}{F_i(y)} \) is uniformly bounded above on \( X_i = f^{-1}(Y_i) \). This upper bound plays an essential role in establishing the local integrability of the integrand.

 Our goal is to show that \( e^{\psi_{Y,M}} \) is a plurisubharmonic function on \( V \). To this end, we verify the mean value inequality. More precisely, we aim to prove that
$$e^{\psi_{Y,M}} \leq \frac{1}{\mathbb{B}}\int_{U}\Psi_X \cdot \frac{1}{\widetilde{h}} = \frac{1}{\mathbb{B}} \int_{\Delta} e^{\psi_{Y,M}}\, dt_1 \wedge \cdots \wedge dt_m \wedge d\overline{t}_1 \wedge \cdots \wedge d\overline{t}_m,
$$
here \( U = f^{-1}(\Delta) \), and \( \mathbb{B} \) denotes the volume of the small ball \( \Delta \subset \mathbb{C}^m \) under the standard Euclidean measure. The mean value inequality then implies the plurisubharmonicity of \( e^{\psi_{Y, M}} \), and hence shows that \( K_Y + M_Y \) admits a singular Hermitian metric with semi-positive curved.

Then we study the integration on each horizontal slice $x \times \Delta$ where $x \in X_0$ after taking the transversely holomorphic trivialization $U \sim X_0 \times \Delta$. By the Fubini theorem for fiber integrals, we compute:
$$\int_{U}\Psi_X \cdot \frac{1}{\widetilde{h}} = \int_{X_0} \int_{x\times \Delta}\Psi_X \cdot \frac{1}{\widetilde{h}}.
$$

We compute
\begin{align}
    \notag \int_{x_0 \times \Delta} \Psi_X \cdot \frac{1}{\widetilde{h}} 
    & \notag = \int_{x_0 \times \Delta} e^{\psi_X} \, dz_1 \wedge \cdots \wedge dz_n \wedge d\overline{z}_1 \wedge \cdots \wedge d\overline{z}_n \wedge dt_1 \wedge \cdots \wedge d\overline{t}_m \cdot \frac{1}{\widetilde{h}} \\
     &\notag = \int_{x_0 \times \Delta} e^{\psi_X} \cdot \frac{1}{h_K \cdot h_{\Delta_D} \cdot e^{-\varphi} \cdot |s_{D}|^2} \, dz_1 \wedge \cdots \wedge dz_n \wedge d\overline{z}_1 \wedge \cdots \wedge d\overline{z}_n \\
    \label{Uintegral} &\qquad\qquad\qquad\qquad \wedge \, dt_1 \wedge \cdots \wedge dt_m \wedge d\overline{t}_1 \wedge \cdots \wedge d\overline{t}_m.
\end{align}
Here, \( h_K \) is a smooth Hermitian metric on the relative canonical bundle \( K_{X/Y} \), which we locally write in the form
\[
h_K = e^{-\tau} \cdot dz_1 \wedge \cdots \wedge dz_n \wedge d\overline{z}_1 \wedge \cdots \wedge d\overline{z}_n,
\]
where \( \tau \in C^\infty(U) \) is a globally defined smooth function.

In the integral expression \eqref{Uintegral}, the local functions
\[
e^{\psi_X}, \quad \frac{1}{e^{-\tau} \cdot h_{\Delta_D} \cdot e^{-\varphi} |s_D|^2}
\]
are psh on \( X \), due to the pseudoeffectivity of \( K_X \) and the semi-positive curvature of the metric \( \widetilde{h} \) in \eqref{moduli metric}.
Under the transversely holomorphic trivialization \( (p, \pi) \) in \eqref{holo trivi}, the subset \( x_0 \times \Delta \subset X \) is a holomorphic submanifold. This implies that the natural inclusion \( x_0 \times \Delta \hookrightarrow X \) is holomorphic, and thus the pullbacks of the psh functions remain psh on \( x_0 \times \Delta \).

However, the local frame \( dz_1 \wedge \cdots \wedge dz_n \) for \( K_{X/Y} \) is not holomorphic along the horizontal slice \( x_0 \times \Delta \) under the deformed complex structure. According to the local expression \eqref{complex str}, we have the change of variables:
\[
dz_1 \wedge \cdots \wedge dz_n \longmapsto \bigwedge_{u=1}^n \left( dz^u - \sum_i h_i^u(z,t) \, d\overline{z}^i \right)
\]
on \( x_0 \times \Delta \). Consequently, the Hermitian metric
\[
h_K = e^{-\tau} \, dz_1 \wedge \cdots \wedge dz_n \wedge d\overline{z}_1 \wedge \cdots \wedge d\overline{z}_n
\]
also transforms accordingly to
\[
h_K = e^{-\tau} \, \left( \bigwedge_{u=1}^n \left( dz^u - \sum_i h_i^u(z,t) \, d\overline{z}^i \right) \wedge \overline{ \left( dz^u - \sum_i h_i^u(z,t) \, d\overline{z}^i \right) } \right)
\]
on the slice \( x_0 \times \Delta \). Despite this deformation, the resulting expression in \eqref{Uintegral} remains positive, and thus does not affect the plurisubharmonicity of the integrand function. Therefore, the mean value inequality holds:
\begin{align*}
& \int_{X_0} \int_{x_0 \times \Delta} 
e^{\psi_X} \cdot \frac{1}{h_K \cdot h_{\Delta_D} \cdot e^{-\varphi} |s_D|^2} \,
dz_1 \wedge \cdots \wedge dz_n \wedge d\overline{z}_1 \wedge \cdots \wedge d\overline{z}_n \\
& \qquad \wedge \, dt_1 \wedge \cdots \wedge dt_m \wedge d\overline{t}_1 \wedge \cdots \wedge d\overline{t}_m \\
& \geq \mathbb{B}  \int_{X_0} 
e^{\psi_X} \cdot \frac{1}{h_K \cdot h_{\Delta_D} \cdot e^{-\varphi} |s_D|^2} \,
dz_1 \wedge \cdots \wedge dz_n \wedge d\overline{z}_1 \wedge \cdots \wedge d\overline{z}_n \\
& = \mathbb{B} ~ e^{\psi_{Y, M}}.
\end{align*}
This yields the desired inequality:
\[
e^{\psi_{Y, M}} \leq \frac{1}{\mathbb{B}} \int_{U} \Psi_X \cdot \frac{1}{\widetilde{h}}.
\]

\subsection{Part $2$}

We have shown that the induced metric on \( K_Y + M_Y \) is semi-positive curved on the smooth locus \( Y_0 = Y \setminus Q \). Since \( B_D + C_D \) is an effective divisor, it follows that \( K_Y + M_Y + B_D + C_D \) is also semi-positive curved on \( Y_0 \). To extend this pseudoeffective metric to the whole \( Y \), it suffices to show that the corresponding plurisubharmonic (psh) function remains locally bounded above in a neighborhood of any point in \( Q \). 
To that end, we now analyze the asymptotic behavior of the Hermitian metric associated with \( K_Y + M_Y + B_D + C_D \) near the singular locus \( Q \). More precisely, we study the asymptotic behavior of the fiber integral
\[
F(y) := \int_{X_y} \Psi_X \cdot \frac{1}{\widetilde{h}} \cdot |s_E|^2,
\]
as \( y (\in Y_0) \to y_0 \in Q \), where \( s_E \) is a local canonical (a root of) section of the effective divisor \( B_D + C_D \), chosen so that \( |s_E|^2 \) captures the singularities of the metric induced by \( B_D + C_D \).

According to Theorem~\ref{thm:toroidal_resolution} in the next section, we may pass to a well-prepared birational model \( f': X' \to Y' \) of \( f \), inducing a birational morphism \( \mu: Y' \to Y \). From now on, we may assume that \( f \) is toroidal (in a neighborhood of \( X_0 \)) with respect to the pairs \( (X, P) \) and \( (Y, Q) \), and that \( f \) is equi-dimensional.

Let \( y_0 \in Q \) be a fixed point, and let \( V \subset Y \) be a coordinate neighborhood centered at \( y_0 \). We consider a locally finite open covering \( \{ U_k \} \) of \( f^{-1}(V) \), together with a partition of unity \( \{ \rho_k \} \) subordinate to \( \{ U_k \} \). Set
\[
\varphi := \Psi_X  \frac{1}{\widetilde{h}} |s_E|^2,
\]
so that for every \( y \in V \setminus Q \), we can express the fiber integral as
\[
F(y) = \sum_k \int_{X_y \cap U_k} \rho_k \varphi.
\]
Note that the sum is finite for each \( y \), since the fiber \( X_y \) is compact. Therefore, to study the singularities or asymptotic behavior of \( F(y) \) near \( y_0 \in Q \), it suffices to analyze the local integrals \( \int_{X_y \cap U_k} \rho_k \varphi \) over each chart \( U_k \). We denote by \( X_{y_0} = f^{-1}(y_0) \) the fiber over the singular point.

\noindent\textbf{Step 1: Local coordinate}. 
For consistency, we continue to follow the notational framework of~\cite{Tak22}. At each \( x_0 \in X_{y_0} \), we can find local coordinates \( U \subset (\mathbb{C}^{n+m}, x = (x_1, \ldots, x_{n+m})), V \subset (\mathbb{C}^m, t = (t_1, \ldots, t_m)) \), such that \( f|_U : U \to V \) is given by \( t = f(x), t_j = f_j(x), j = 1, \ldots, m \), with
\[
t_1 = \prod_{j=1}^{\ell_1} x_j^{a_j}, \quad t_2 = \prod_{j=\ell_1+1}^{\ell_2} x_j^{a_j}, \ldots, \quad t_m = \prod_{j=\ell_{m-1}+1}^{\ell_m} x_j^{a_j}
\]
for some \( 0 = \ell_0 < \ell_1 < \ldots < \ell_m \leq n + m \) and \( a_j \in \mathbb{Z}_{>0} \). The coordinates \( x, t \) depend on \( x_0 \), and \( Q \cap V \subset \{t_1 \cdots t_m = 0\} \) and \( P \cap U \subset \{x_1 \cdots x_{n+m} = 0\} \). We can write as \( \ell_i = \ell_{i-1} + n_i + 1 \) with \( n_i \in \mathbb{Z}_{\geq 0} \) and $\sum_{i=1}^{m+1} n_i = n$,  where $n_{m+1} := n + m - \ell_m$. We denote by 
$$y_j = x_{\ell_j}, b_j = a_{\ell_j}$$
for \( j = 1, \ldots, m \), and by
\[
\textbf{x}_j = (x_{\ell_{j-1}+1}, \ldots, x_{\ell_{j-1}+n_j}), \quad \textbf{a}_j = (a_{\ell_{j-1}+1}, \ldots, a_{\ell_{j-1}+n_j}),
\]
\[
\textbf{x}_{m+1} = (x_{\ell_m+1}, \ldots, x_{n+m}).
\]
We will use a multi-index convention. For example \( t_j = \textbf{x}_j^{a_j} y_j^{b_j} \).

\noindent\textbf{Step 2: Estimate of $\Psi_X \cdot |s_E|^2$.}
We choose local frames \( dx = \wedge_{i=1}^{n+m} dx_i \) of \( K_X |_U \) and \( dt = \wedge_{j=1}^m dt_j \) of \( K_Y |_V \) respectively. From the relation 
\[
\frac{dt_j}{t_j} = \sum_{i=\ell_{j-1}+1}^{\ell_{j-1}+n_j} a_i \frac{dx_i}{x_i} + b_j \frac{dy_j}{y_j},
\]
we deduce that
\[
\wedge_{i=\ell_{j-1}+1}^{\ell_j} dx_i = \frac{y_j}{b_j t_j} (\wedge_{i=\ell_{j-1}+1}^{\ell_j} dx_i) \wedge dt_j,
\]
in other term \( d\textbf{x}_j \wedge dy_j = \frac{y_j}{b_j t_j} d\textbf{x}_j \wedge dt_j \). Therefore, a local frame for the relative canonical bundle \( \frac{dx}{dt} \) of \( K_{X/Y} \) on \( U \) can be expressed as
\[
\frac{dx}{dt} = \bigwedge_{j=1}^m \left( \frac{y_j}{b_j t_j} d\textbf{x}_j \right) \wedge d\textbf{x}_{m+1},
\]
which satisfies the identity \( \frac{dx}{dt} \wedge f^* dt = \pm dx \).

We study the local integral of 
\begin{equation} \label{integral-s}
\varphi = \Psi_X \cdot \frac{1}{\widetilde{h}} \cdot |s_E|^2 = e^{\psi_X} \left| \frac{dx}{dt} \wedge f^*dt \right|^2 \cdot \frac{1}{\widetilde{h}} \cdot |s_E|^2.
\end{equation}
We can suppose \( V \setminus Y_0 = \{ t_1 \cdots t_q = 0 \} \) for some \( 0 \leq q \leq m \). We can suppose \( Q \cap V = V \setminus Y_0 \). We let \( Q_j \cap V = \{ t_j = 0 \} \) for \( 1 \leq j \leq q \), and \( P_i \cap U = \{ x_i = 0 \} \) for \( 1 \leq i \leq \ell_q \) (such that \( P_i \) is \( f \)-vertical and appears in \( D \)).

For notational simplicity, we suppose \( q = m \) (and hence \( \ell_q = \ell_m \)). Other cases \( 0 \leq q \leq m-1 \) can be deduced from the case \( q = m \). In particular, \( P_i \) is \( f \)-vertical if and only if \( 1 \leq i \leq \ell_m \). We write
\[
D^h_{-} = \operatorname{div} \left( \prod_{i > \ell_m, -d_i > 0} x_i^{-d_i} \right), \quad 
D^v_{-} = \operatorname{div} \left( \prod_{i \leq \ell_m, -d_i > 0} x_i^{-d_i} \right).
\]
We estimate the integrand \eqref{integral-s}. Shrinking \( V \) and \( U \) a bit, we may suppose
\[
\left| \frac{dx}{dt} \right|^2 = \left| \frac{y}{t} \right|^2 = \prod |y_i|^{2(1 - a_{\ell_i})} \prod \frac{1}{|x_j|^{2a_j}},
\]
and by \eqref{ej}
\[
|s_E|^2 = \prod_j |t_j|^{2e_j} \geq \prod |x_i|^{2(a_i - 1)}.
\]
Recall that
\[
e_j = \beta_j + \gamma_j = \max_i \left\{ \frac{d_i + a_{ji} - 1}{a_{ji}} \right\} + \max_i \left\{ \frac{-d_i}{a_{ji}} \right\} 
\geq \frac{d_i + a_{ji} - 1}{a_{ji}} + \frac{-d_i}{a_{ji}} = \frac{a_{ji} - 1}{a_{ji}} \geq 0.
\]
We also have the local expression \( t_j = \mathbf{x}_j^{a_j} y_j^{b_j} \).
Hence, the singular part of \( \Psi_X \cdot |s_E|^2 \) is locally given by
\begin{equation}\label{star1}
\left| \frac{dx}{dt} \right|^2 \cdot \prod_j |t_j|^{2e_j} \sim \left| \frac{y}{t} \right|^2 \cdot \prod_j |t_j|^{2e_j}. \quad~~~ \textbf{($\bigstar$)} 
\end{equation}

\textbf{Step 3: Estimate of $\widetilde{h}=F_i(y)$.}
For simplicity, here we suppose that $B_D=0$ since here we only deal with $M_Y$.
Then we estimate the value of $\widetilde{h} = F_i(y)$, which is constant on each fiber; moreover, it is equal (see \eqref{Fi})
\[
F_i(y) = \int_{X_y} g_i c_n \sigma_y \wedge \bar{\sigma}_y.
\]
We may suppose
\begin{align*}
    |\sigma_U|^2 &= A_1 \cdot \prod_{i,-d_i > 0} |x_i|^{2\lceil - d_i \rceil} \quad(\sim |\lceil{D_-}\rceil|^2), \\
g &= A_2 \cdot \left( \prod_{i,-d_i > 0} |x_i|^{2(\lceil -d_i\rceil  + d_i)} \right)^{-1}  \quad\left( \sim \frac{1}{ |\lceil {D_-}\rceil - D_-|^2} \right)
\end{align*}
on \( U \), where \( A_1, A_2 \) are \( C^\infty \)-smooth and positive functions on a neighborhood of \( U \). 
Its symbolic calculus is
\begin{align} \label{star2}
 \notag (g c_n \sigma \wedge \overline{\sigma})|_{X_y \cap U}  &\sim \frac{1}{ |\lceil D_- \rceil - D_-|^2} |\lceil D_- \rceil|^2 \left| \frac{d x}{d t} \right|^2 \\
\notag &\sim |D_-|^2 \left| \frac{y}{t} \right|^2 \prod_{j=1}^{m+1} |d \textbf{x}_j|^2 \\
&= \left| \frac{D^v_{-}y}{t}  \right|^2 \prod_{j=1}^m |d \textbf{x}_j|^2 \cdot |D_-^h|^2 |d \textbf{x}_{m+1}|^2. \quad~~~\textbf{($\bigstar\bigstar$)}
\end{align}

Then, for any \( t \in V \setminus Q \),
\[
\quad (g c_n \sigma \wedge \overline{\sigma})|_{X_y \cap U} = A_1 \cdot \prod_{j=1}^m b_j^{-2} \cdot A_2 \cdot \bigwedge_{j=1}^{m+1} W_j
\]
holds. Here we meant \( A_i|_{X_y \cap U} \) and \( |y_j| = |t_j / \textbf{x}_j^{a_j}|^{1/b_j} \), and here

\begin{align*}
&W_j = (|y_j|^2)^{-d_{\ell_j}+1-a_{\ell_j}} \prod_{i=\ell_{j-1}+1}^{\ell_{j-1}+n_j} (|x_i|^2)^{-d_i - a_i} c_{n_j} d x_j \wedge d \overline{x}_j, \quad j=1,\ldots,m, \\
&W_{m+1} = \prod_{i>\ell_m,d_i<0} |x_i|^{-2d_i} c_{n_{m+1}} d x_{m+1} \wedge d \overline{x}_{m+1}.
\end{align*}

 We note, as a consequence of \( B_D = 0 \), that for every \( j \in \{1,\ldots,m\} \) and \( i \in \{\ell_{j-1}+1,\ldots,\ell_j\}, -d_i - a_i \geq -1 \) holds, in particular \( -d_{\ell_j} + 1 - a_{\ell_j} \geq 0 \) holds. In view of these estimates, we then introduce a (possibly singular) semi-positive \((n,n)\)-form
\[
\psi = \bigwedge_{j=1}^m \frac{c_{n_j} d x_j \wedge d \overline{x}_j}{|x_j|^2} \wedge c_{n_{m+1}} d x_{m+1} \wedge d \overline{x}_{m+1}
\]
on \( (\mathbb{C}^{n+m}, x) \), where \( |x_j|^2 = \prod_{i=\ell_{j-1}+1}^{\ell_{j-1}+n_j} |x_i|^2 \). We then have a constant \( A > 0 \) (independent of \( t \)) such that
\[
0 \leq (g c_n \sigma \wedge \overline{\sigma})|_{X_t \cap U} \leq A \psi|_{X_t \cap U}
\]
holds for any \( t \in V \setminus Q \). Here we disregarded \( |D_-^h|^2 \)-part in the bottom line in \eqref{star2}. We evaluate \( \int_{X_t \cap U} \widetilde{R} \psi \) for a larger domain \( \widetilde{U} (\supset U) \) in \( \mathbb{C}^{n+m} \) and a cut-off function \( \widetilde{R} \) with Supp \( \widetilde{R} \subset \widetilde{U} \). According to Takayama \cite[Theorem $3.1$]{Tak22}, the function \( F_i \) admits the following estimate:
\[
c_1 \leq F_i \leq c_2 \prod_{j=1}^m (-\log|t_j|)^n,
\]
for some positive constants \( c_1 \) and \( c_2 \).

\textbf{Step 4: Comparison.}
As the equality \((\bigstar\bigstar)\) shows, we have
\[
\Psi_X \cdot |s_E|^2 \sim \left| \frac{y}{t} \right|^2 \cdot \prod_j |t_j|^{2e_j}.
\]
Note that \( e_j = \beta_j + \gamma_j = \max_i \left\{ \frac{d_i + a_{ji} - 1}{a_{ji}} \right\} + \max_i \left\{ \frac{-d_i}{a_{ji}} \right\} \), where \( \beta_j \) represents the coefficient of \( B_D \), and \( \gamma_j \) represents the maximal part of \( D_{-}^v \) (i.e., taking the maximum among all coefficients).

On the other hand, in the formula \((\bigstar)\), we have
\[
(g c_n \sigma \wedge \overline{\sigma})|_{X_y \cap U} \sim \left| \frac{D_{-}^v y}{t} \right|^2 \prod_{j=1}^m |d \mathbf{x}_j|^2 \cdot |D_{-}^h|^2 \cdot |d \mathbf{x}_{m+1}|^2.
\]
We observe that there is no \( f^* B_D \) contribution here, due to the assumption \( B_D = 0 \) in Step 3 (but we consider it here). Moreover, it is clear that \( f^* (\gamma_j Q_j) \geq D_{-}^v \) (The effectiveness of the divisor increases with the size of the positive exponent \( a_j \) in \( x_j^{a_j} \)). We do not consider the \( D^h_- \) part, as it is horizontal and does not affect the fiber integral.

Therefore, we obtain the inequality
\[
\Psi_X \cdot |s_E|^2 \leq C \cdot (g c_n \sigma \wedge \overline{\sigma})
\]
for some constant \( C > 0 \). It follows that
\[
\int_{X_y} \Psi_X \cdot |s_E|^2 \leq C \int_{X_y} (g c_n \sigma \wedge \overline{\sigma}) = C F_i(y) = c \cdot \widetilde{h}(y),
\]
which implies that
\[
F(y) := \int_{X_y} \Psi_X \cdot \frac{1}{\widetilde{h}} \cdot |s_E|^2 = \frac{1}{\widetilde{h}(y)} \int_{X_y} \Psi_X \cdot |s_E|^2
\]
remains bounded as \( y \to y_0 \) for any point \( y_0 \in Q \).

By the extension property of bounded plurisubharmonic functions, the line bundle \( K_X + M_Y + B_D + C_D \) admits a Hermitian metric with semi-positive curvature on the whole of \( Y \). This completes the proof of the Theorem \ref{claim}.

\section{The proof of the Schnell conjecture}
We now prove Theorem~\ref{schnell-3} in this section, following the same strategy as in the proof of Theorem~\ref{claim}. As previously mentioned, under the assumption of the non-vanishing conjecture, the Campana--Peternell conjecture admits the following reformulation:  

\begin{theorem} \label{schnell-2}
Let \( f: X \to Y \) be an algebraic fiber space between smooth projective varieties, and suppose that the general fiber \( F \) satisfies \( \kappa(F) = 0 \). If there exists an ample divisor \( H \) on \( Y \) such that \( m_0K_X - f^*H \) is pseudo-effective for some \( m_0 \geq 1 \), then \( \kappa(X) = \dim Y \).
\end{theorem}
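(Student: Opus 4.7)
Since $H^0(X, mK_X) = H^0(Y, m(K_Y + B_D + M_Y))$ for sufficiently divisible $m$ by condition~(3) of the canonical bundle formula, and $\kappa(X) \leq \dim Y$ by the easy addition formula, it suffices to prove that $K_Y + B_D + M_Y$ is big on $Y$. My plan is to upgrade the argument of Theorem~\ref{claim} by incorporating the additional horizontal positivity afforded by the hypothesis that $m_0 K_X - f^*H$ is pseudo-effective (rather than merely $K_X$ being pseudo-effective). Concretely, fix a singular Hermitian metric $h_X = e^{-\psi_X}$ on $K_X$ whose curvature current satisfies $\sqrt{-1}\,\partial\overline{\partial}\,\psi_X \geq \tfrac{1}{m_0}\, f^*\omega_H$ in the sense of currents, where $\omega_H \in c_1(H)$ is a smooth Kähler form on $Y$. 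Such a metric exists because $m_0 K_X - f^*H$ is pseudo-effective and $H$ is ample.

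\textbf{Upgraded fiber integration.} Apply Parts~1 and~2 of the proof of Theorem~\ref{claim} using this metric $h_X$. Over the smooth locus $Y_0 = Y \setminus Q$, the fiber integral $\Psi_{Y,M} = \int_{X/Y} \Psi_X / \widetilde{h}$ defines a singular Hermitian metric on $K_Y + M_Y$. The mean value inequality established in Part~1 of Theorem~\ref{claim} is sharpened by the bound on $\sqrt{-1}\,\partial\overline{\partial}\psi_X$: the extra horizontal form $\tfrac{1}{m_0}\,f^*\omega_H$ in $\psi_X$ descends under fiber integration to yield the plurisubharmonicity of $\psi_{Y,M} - \tfrac{1}{m_0}\phi_H$, where $\phi_H$ is a local potential for $\omega_H$. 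Equivalently, the induced metric has curvature current bounded below by $\tfrac{1}{m_0}\omega_H$ on $Y_0$. Incorporating the canonical section of $B_D + C_D$ and running the toroidal asymptotic analysis of Part~2 (Steps~1--4), which is unaffected by the smooth form $\tfrac{1}{m_0} f^*\omega_H$, extends this metric across $Q$. Consequently, $m_0(K_Y + M_Y + B_D + C_D) - H$ is pseudo-effective on $Y$; since $H$ is ample, $K_Y + M_Y + B_D + C_D$ is big.

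\textbf{The critical reduction, and the main obstacle.} The remaining and most delicate step is to deduce from this the bigness of $K_Y + M_Y + B_D$ itself, eliminating the auxiliary divisor $C_D$. Here the hypothesis $\kappa(F) = 0$ is expected to play its essential role: using the flexibility in the choice of the canonical bundle divisor (cf.~\cite{Kol07}), one hopes to choose the decomposition $K_X + D \sim_{\mathbb{Q}} f^*(K_Y + B_D + M_Y)$ so that the negative vertical part $D_-^v$ is minimized—ideally made zero or absorbed into the boundary via the definition of $\beta_j$—forcing $C_D = 0$. With such a choice, the analytic construction above produces a metric directly on $K_Y + M_Y + B_D$ with curvature $\geq \tfrac{1}{m_0}\omega_H$, and bigness follows. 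The principal obstacle I anticipate is precisely this reduction: ensuring that the canonical bundle decomposition can be refined under $\kappa(F) = 0$ so that $C_D$ vanishes, or, alternatively, devising a variant of the analytic construction that employs only a section of $B_D$ while preserving integrability of the fiber integral near $Q$ via the extra positivity from $f^*\omega_H$. This is the point at which the stronger hypothesis $\kappa(F) = 0$ (compared to $\kappa(F) \geq 0$ in Conjecture~\ref{schnell-1}) must be brought to bear.
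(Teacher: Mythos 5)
Your overall strategy is correct and matches the paper's: reduce via the canonical bundle formula and easy addition to showing that $K_Y + B_D + M_Y$ is big, run the fiber-integration argument of Theorem~\ref{claim} with the extra $\delta H$-positivity carried along (the paper uses $K_X + D - \delta f^*H \sim_{\mathbb{Q}} f^*(K_Y + B_D + M_Y - \delta H)$ and notes $f^*\psi_H$ is constant on fibers, so it is harmless in the fiber integral), conclude that $K_Y + B_D + C_D + M_Y$ is big, and then face the ``critical reduction'' of eliminating $C_D$. You correctly locate the bottleneck, but the two mechanisms you propose for closing it do not work, and neither is what the paper does.

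Your first alternative---choosing the decomposition so that $D^v_- = 0$, hence $C_D = 0$---fails in general. Condition~(1) ($f_*\mathscr{O}_X(\lfloor iD_-\rfloor) = \mathscr{O}_Y$) only guarantees that over each $Q_j$ \emph{at least one} vertical coefficient $d_i$ equals $0$; the remaining vertical components of $D_-$ may still have $d_i < 0$, so $C_D$ generally cannot be killed by a better choice of $D$. Your second alternative---keeping only $|s_B|^2$ in the integrand and using the ``extra positivity from $f^*\omega_H$'' to preserve integrability near $Q$---does not help either: $f^*\omega_H$ is a smooth pullback form, bounded near $Q$, and does nothing to offset the singularity of the integrand coming from the vertical divisor $D^v_-$. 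The positivity of $H$ in the paper is used only at the very end, to upgrade pseudoeffectivity to bigness.

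What actually makes the reduction work in the paper is a combination of two ideas you did not supply. First, $\kappa(F)=0$ enters precisely through condition~(1), which forces $B_D$ to be \emph{effective} (one $d_i$ over each $Q_j$ vanishes, so $\beta_j \geq (a_{ji}-1)/a_{ji} \geq 0$); in Theorem~\ref{claim} this is not available and one must carry $C_D$ to compensate. Second, over $Y_0$ one replaces the metric $\widetilde h$ on $f^*M_Y$ (which corresponded to the relation $K_{X/Y}+D \sim_{\mathbb{Q}} f^*M_Y$) by the modified metric
\[
h' := \frac{\widetilde h}{|s_{\lceil D^v_-\rceil}|^2} = \frac{F_i}{|s_{\lceil D^v_-\rceil}|^2},
\]
which instead corresponds to $K_{X/Y} - D^h_- \sim_{\mathbb{Q}} f^*M_Y$ over $Y_0$. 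In the resulting fiber integral $F'(y) = \int_{X_y} \Psi_X \cdot (h')^{-1} \cdot |s_B|^2$, the factor $|s_{\lceil D^v_-\rceil}|^2$ appearing in $(h')^{-1}$ is precisely what absorbs the vertical singularity, so only the (effective) $B_D$-section is needed; the Step~4 comparison is then run against $(g\,c_n\,\sigma' \wedge \overline{\sigma'})$ with $\sigma'$ the canonical section of $\lceil D^h_-\rceil$ rather than $\lceil D_-\rceil$, and boundedness near $Q$ follows. So the fix is a reallocation between the metric on $M_Y$ and the divisor part, not extra horizontal positivity; this is the concrete content that your proposal is missing.
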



To make this precise, we recall some notation. First, following \cite[\S 5]{Sch22}, we observe that the conjecture is invariant under taking birational models of \( X \) and \( Y \). By taking suitable resolutions, we may and will assume that there exists an SNC divisor \( Q = \sum_{i \in I} Q_i \) on \( Y \) such that \( f \) is smooth over the open subset \( Y_0 = Y \setminus Q \), and that the reduced inverse image \( f^{-1}(Q)_{\mathrm{red}} = \sum_{j \in J} P_j \) is an SNC divisor on \( X \).
Following \cite{FM00, Kol07}, we may write the canonical divisor \( K_X \) as:
\begin{equation} \label{eq:canonical_decomp_2}
K_X + D \sim_{\mathbb{Q}} f^*(K_Y + B_D + M_Y),
\end{equation}
where the decomposition satisfies the following properties:

\begin{enumerate} \label{three-cond}
\item \( f_* \mathscr{O}_X (\lfloor iD_{-} \rfloor) = \mathscr{O}_Y \) for all \( i > 0 \);
\item \( \operatorname{codim}_Y f(\operatorname{Supp} D_{+}) \geq 2 \);
\item \( H^0(X, mK_X) = H^0(Y, m(K_Y + B_D + M_Y)) \) for all sufficiently divisible integers \( m \) (see \cite[Theorem 4.5]{FM00}).
\end{enumerate}

Since this conjecture is sensitive to taking finite coverings, we cannot use the so-called weak semistable reduction of \( f \). Instead, we apply a resolution of singularities theorem for morphisms due to Abramovich and Karu~\cite{AK00}.


\begin{theorem} \label{thm:toroidal_resolution}
Let \( f:X \to Y \) be a surjective morphism of smooth projective varieties with connected fibers and \( Z \) a closed subset of \( X \). Then there exists:
\begin{itemize}
\item A quasi-smooth projective toroidal variety \((X',B')\),
\item A smooth projective toroidal variety \((Y',C')\),
\item A projective morphism \( f':X'\to Y' \) with connected fibers, and
\item Projective birational morphisms \( v_X:X'\to X \), \( v_Y:Y'\to Y \)
\end{itemize}
such that \( v_Y\circ f'=f\circ v_X \) and which satisfy the following conditions:
\begin{itemize}
\item \( f':(X',B') \to (Y',C') \) is a toroidal morphism.
\item All the fibers of \( f' \) have the same dimension (\( f' \) is flat in fact).
\item \( v_X^{-1}(Z) \subset B' \).
\end{itemize}
\end{theorem}

We call the morphism \( f':(X',B') \to (Y',C') \) a \textit{well-prepared birational model} of \( f:X\to Y \). A toroidal variety \((X,B)\) is said to be:
\begin{itemize}
\item \textit{smooth} if \( X \) is smooth and \( B \) has only normal crossings.
\item \textit{quasi-smooth} if there exists a local toric model of each point which is a quotient of a smooth toric local model by a finite abelian group (some people say that \( X \) is a \( V \)-manifold and \( B \) is \( V \)-normal crossing).
\end{itemize}

Finally, we begin the proof of Schnell's conjecture, following the proof of Theorem~\ref{claim} with some modifications.
\begin{proof} [Proof of Theorem \ref{schnell-2}]
We consider the canonical bundle formula in \eqref{eq:canonical_decomp_2}. Here, we only consider the negative part $D_-=D^h_- + D^v_-$ of $D$, as given by $(2)$ above. For a fixed component $Q_j$ in $Q$, let $f^*Q_j = \sum_i a_{ji}P_i$. Since $D^v_- = \sum (-d_i)P_i$, and \( f_* \mathscr{O}_X (\lfloor iD_{-} \rfloor) = \mathscr{O}_Y \) for all \( i > 0 \), which yields that at least one of coefficient $d_i$ in $P_i$ which in the preimage of $Q_j$ is $0$. This means that $B_D$ here is effective, whereas the $B_D$ in Theorem~\ref{claim} need not be effective.


As the proof of Theorem \ref{claim} shows, the pseudo-effectiveness of $K_X$ implies that $K_Y+M+B_D +C_D$ is pseudo-effective too. We consider the variant formula 
$$
K_X + D -\delta f^*H \sim_{\mathbb{Q}} f^*(K_Y + B_D + M_Y-\delta H).
$$
for ample divisor $H$ in $Y$ and here $\delta$ is some sufficient small positive rational number such that $K_X -\delta f^*H$ is pseudoeffective by the assumption in Theorem \ref{schnell-2}. 
Let \( e^{-\psi_X} \) be the singular Hermitian metric on \( K_X \). By assumption, there exists a smooth positive metric \( e^{-\psi_H} \) of \( H \) on $Y$ such that 
\[
\psi_X - \delta f^*\psi_H = \psi_{X,H}
\]
for some psh weight function \( \psi_{X,H} \). Hence, we have 
\(
\psi_X = \psi_{X,H} + \delta f^*\psi_H.
\)
As in \eqref{canonical metric}, we define
\[
\Psi_X := e^{\psi_{X,H} + \delta f^*\psi_H} \, dz_1 \wedge \cdots \wedge dz_n \wedge d\overline{z}_1 \wedge \cdots \wedge d\overline{z}_n \wedge dt_1 \wedge \cdots \wedge dt_m \wedge d\overline{t}_1 \wedge \cdots \wedge d\overline{t}_m.
\]
By the same argument as in the proof of Theorem~\ref{claim}, it follows that the divisor 
\[
K_Y + B_D + C_D + M_Y - \delta H
\]
is pseudo-effective. Note that \(f^*\psi_H\) is constant along each fiber and does not affect the fiber integrals.
We conclude that \( K_Y + B_D + C_D + M_Y \) is big. The divisor \( C_D \) constitutes an obstruction to directly establishing the bigness of \( K_Y + B_D + M_Y \). We now investigate the bigness of \( K_Y + B_D + M_Y \).

Now that we know \(B_D\) is effective, we first construct a semi-positively curved metric on \(K_Y+M_Y\) over \(Y_0\). Adding \(B_D\) preserves pseudoeffectivity on \(Y_0\). The main difficulty lies in extending this metric to \(Y\) without involving \(C_D\). To address this, we modify the metric of \(M_Y\) on \(Y_0\) to obtain a new metric on \(K_Y+B_D+M_Y\), which can then be extended to the whole of \(Y\) by Proposition~\ref{Tak24}.

Since \( C_D \) is supported on the exceptional locus \( Q \), we now consider the following formula on the open subset \( Y_0 := Y \setminus Q \):
\[
K_X + D - \delta f^*H \sim_{\mathbb{Q}} K_X - D^h_- - \delta f^*H \sim_{\mathbb{Q}} f^*(K_Y + M_Y - \delta H).
\]
In the first part of the proof of Theorem~\ref{claim}, we showed that if \( K_X - \delta f^*H \) is pseudo-effective, then \( K_Y + M_Y - \delta H \) is also pseudo-effective on \( Y_0 \), and the metric on \( K_X \) descends to a singular Hermitian metric on \( K_Y \). 
Now over $Y_0$, as \eqref{moduli line}, we have 
$$
K_{X/Y} + \Delta_D - \lceil D^h_{-} \rceil = K_{X/Y} -D^h_- \sim_{\mathbb{Q}} f^*M_Y.
$$
Moreover, in formula~\eqref{moduli metric}, the global singular Hermitian metric \( h_K h_{\Delta_D} e^{-\varphi} \) on \( K_{X/Y} + \Delta_D \) has the property that the Lelong number of \( \varphi \) at each point is greater than or equal to that of \( \lceil D_- \rceil \) (namely the multiplicity), and in particular, greater than \( \lceil D^v_- \rceil \).

Hence, we can endow \( f^*M_Y \) with a semi-positive singular Hermitian metric \( h' \) induced from the metric \( h_K h_{\Delta_D} e^{-\varphi} \) on \( K_{X/Y} + \Delta_D \) over \( f^{-1}(Y_0) \), using the relation
\begin{equation} \label{moduli line 2}
K_{X/Y} + \Delta_D - \lceil D^h_- \rceil \sim_{\mathbb{Q}} f^*M_Y.
\end{equation}
This metric \( h' \) is defined explicitly by
\begin{equation} \label{moduli metric 2}
h' := \frac{\widetilde{h}}{|s_{\lceil D^v_- \rceil}|^2} 
= \frac{h_K \cdot h_{\Delta_D} \cdot e^{-\varphi} \cdot |s_{\lceil D_- \rceil}|^2}{|s_{\lceil D^v_- \rceil}|^2} 
= \frac{F_i}{|s_{\lceil D^v_- \rceil}|^2},
\end{equation}
where \( \widetilde{h} := h_K \cdot h_{\Delta_D} \cdot e^{-\varphi} \cdot |s_{\lceil D_- \rceil}|^2 \) is the metric on \( K_{X/Y} + \Delta_D -\lceil D_- \rceil = K_{X/Y} +D\) as in \eqref{moduli metric}.

For a local non-vanishing section \( \sigma \in H^0(\Omega, M_Y) \) for \( \Omega \subset Y_0 \), we see that
\[
|\sigma|_{h',x}^2 : X_y \to \mathbb{R}_{\geq 0} \cup \{+\infty\}
\]
has to be a constant function since the only global psh functions on a compact space are constants. We define \( |\sigma|_{h'',y} := |\sigma|_{h',x} \) for \( y = f(x) \). This gives a positively curved singular metric on \( M_Y \) over \( Y_0 \). 
To our end, we now analyze the asymptotic behavior of the Hermitian metric associated to \( K_Y + M_Y + B_D  \) near the singular locus \( Q \). More precisely, we study the asymptotic behavior of the fiber integral
\[
F'(y) := \int_{X_y} \Psi_X \cdot \frac{1}{h'} \cdot |s_B|^2,
\]
as \( y (\in Y_0) \to y_0 \in Q \), where \( s_B \) is a local canonical (a root of) section of the effective divisor \( B_D \) similar to \eqref{root}.
  
Similar to the second part (Step 4) of the proof of Theorem \ref{claim}, we can show that \( F'(y) \) remains bounded as \( y \to y_0 \) for any point \( y_0 \in Q \). Indeed, in contrast to formula~\eqref{star1} ($\bigstar$), in this case we have
\[
\Psi_X \cdot |s_B|^2 \sim \left| \frac{y}{t} \right|^2 \cdot \prod_j |t_j|^{2\beta_j}.
\]
Note that \( \beta_j  = \max_i \left\{ \frac{d_i + a_{ji} - 1}{a_{ji}} \right\}  \), where \( \beta_j \) represents the coefficient of \( B_D \) which is non-negative. There exists a constant \( C > 0 \). such that
\[
\int_{X_y} \Psi_X \cdot |s_B|^2 \leq C \int_{X_y} (g c_n \sigma' \wedge \overline{\sigma'}) = C h'(y),
\]
where \(\sigma'\) is simply the canonical section of \(s_{\lceil D^h_- \rceil}\), rather than \(s_{\lceil D_- \rceil}\).
Indeed, in contrast to formula~\eqref{star2} \((\bigstar\bigstar)\), we now obtain
\[
(g c_n \sigma' \wedge \overline{\sigma'})|_{X_y \cap U} \sim \left| \frac{y}{t} \right|^2 \prod_{j=1}^m |d \mathbf{x}_j|^2 \cdot |D_{-}^h|^2 \cdot |d \mathbf{x}_{m+1}|^2.
\]
This inequality implies that $F'(y)$ remains bounded as \( y \to y_0 \) for any point \( y_0 \in Q \).

We interpret $F'(y)$ as the norm of sections of $-(K_Y + M_Y + B_D)$ over $Y_0$. 
The Proposition \ref{Tak24}, combined with the extension property of bounded plurisubharmonic functions, shows that the line bundle $K_X + M_Y + B_D$ admits a semi-positively curved Hermitian metric globally on $Y$. 
Since $K_X - \delta f^*H$ is positive, it follows that $K_Y + M_Y + B_D$ is big, which completes the proof.
\end{proof}

\section{Miscellaneous results}
  
We prove the following results using our new methods, which are analytically natural. These results concern the smooth descent of the positivity of the canonical bundle. For related developments, the reader may consult \cite{PS23, Par23}. 

\begin{theorem} \cite[Proposition G]{PS23} \label{G}
Let \(f \colon X \to Y\) be a smooth algebraic fiber space with connected fibers, and denote by \(F\) a general fiber. 
Assume that \(\kappa(X) \geq 0\).
Then $ K_Y$  is pseudoeffective.
\end{theorem}

\begin{proof}
In this case, we have the canonical bundle formula
\[
K_X - K_{X/Y} = f^{*}(K_Y).
\]
The proof proceeds as in Part~1 of the proof of Theorem~\ref{claim}, with $K_{X/Y}$ playing the role of $M_Y$ in that argument.

We denote the $\nu$-th relative Bergman kernel metric of $-\nu K_{X/Y}$ by $B_{\nu}$. 
For $y = f(x)$, we define
\begin{equation} \label{berg}
B_{\nu}(x) = \sup \left\{\, c\, u \wedge \bar{u}(x) \,\middle|\, 
u \in H^0(X_y, \nu K_{X_y}),\ \|u\|_{\nu} \le 1 \right\}.
\end{equation}

In our situation, take a section $s \in H^0(X, K_X)$. 
We define a metric on $K_X$ by
\[
e^{-\psi_X} := \frac{1}{|s|^2}.
\]
Let $h_{\nu} := \frac{1}{B_{\nu}}$ be the relative Bergman-type metric on $\nu K_{X/Y}$.
Recall that
\[
\Psi_X := e^{\psi_X} \, 
dz_1 \wedge \cdots \wedge dz_n \wedge 
d\overline{z}_1 \wedge \cdots \wedge d\overline{z}_n \wedge
dt_1 \wedge \cdots \wedge dt_m \wedge
d\overline{t}_1 \wedge \cdots \wedge d\overline{t}_m
\]
defines a global $(n+m,n+m)$-form on $X$.

Instead, we consider the following:
\begin{equation} \label{psi-l}
\Psi := \int_{X/Y}  \frac{1}{h_1} \, h_1 e^{\psi_X} \, 
dz_1 \wedge \cdots \wedge dz_n \wedge 
d\overline{z}_1 \wedge \cdots \wedge d\overline{z}_n \wedge
dt_1 \wedge \cdots \wedge d\overline{t}_m.
\end{equation}
If we set
\[
\Psi := e^{\psi_Y} \, dt_1 \wedge \cdots \wedge dt_m \wedge d\overline{t}_1 \wedge \cdots \wedge d\overline{t}_m,
\]
then \(e^{-\psi_Y}\) defines a metric on \(K_Y\), and, as part $1$ of the proof of the main theorem, one can show that \(e^{\psi_Y}\) (hence $\psi_Y$) is psh.

Note that \(h_1\), as the metric on \(K_{X/Y}\), plays the role of \(M_Y\) in previous arguments, reflecting the variation of the complex structure. 
By the definition of the Bergman metric,
\[
h_1e^{\psi_X} =h_1 |s|^2 = \frac{|s|^2}{B_1},
\]
which is bounded, and it is also the ratio of two holomorphic sections. Therefore, it must be plurisubharmonic. As we just said, \(1/h_1\) defines a pseudoeffective metric, and hence is plurisubharmonic.
These observations imply that \(e^{\psi_Y}\) is plurisubharmonic when considering the fiber integral.
\end{proof}

Using the same method, we can also prove the following results in \cite{PS23}.
\begin{corollary}
Let \( f: X \to Y \) be a smooth projective morphism of smooth projective varieties. If \(X\) is of general type, then \(Y\) is also of general type.
\end{corollary}

\begin{theorem} \cite[Theorem A]{PS23} \label{A}
Let \(f \colon X \to Y\) be a smooth algebraic fiber space with connected fibers, and denote by \(F\) a general fiber. 
Assume that \(\kappa(F) \geq 0\).
Then
\[
Y \text{ is of general type }
\quad \Longleftrightarrow \quad
\kappa(X) = \kappa(F) + \dim Y .
\]
\end{theorem}

\begin{proof}
    
We start by considering a general construction. Let $A$ be a very ample line bundle on $Y$. 
According to the easy addition formula, the condition
\[
\kappa(X)=\kappa(F)+\dim Y
\]
is equivalent to the existence of an integer $l\ge1$ such that
\begin{equation*}\label{eq:nonzero-section}
H^0\!\bigl(X,\ \omega_X^{\otimes l}\otimes f^*A^{-1}\bigr)\neq 0.
\end{equation*}

\textbf{Step 1.} If \(K_Y\) is big, we can choose a sufficiently large integer \(l\) such that \(l K_Y - A\) is pseudoeffective. Consider
\[
l K_X - f^* A = l K_{X/Y} + f^*(l K_Y - A).
\]
Since for the generic fiber \(F\) we have \(\kappa(F) \ge 0\), the relative canonical bundle \(K_{X/Y}\) admits a Bergman metric that is pseudoeffective. Combining this with Theorem~\ref{schnell-1} below, we conclude that the pseudoeffectiveness of \(l K_X - f^* A\) implies that \(l K_X - f^* A\) is effective.

\textbf{Step 2.} Assume that \(l K_X - f^* A = D\) is effective. Then \(lK_X\) is effective, and one of its sections can be written as
\[
s = s_D \cdot s_A,
\]
where \(s_D\) is the canonical section of \(D\) and \(s_A\) is a section of \(f^* A\). 
Let \(h_A\) denote a (semi-)positive metric on \(f^* A\), obtained as the pullback of a strictly positive metric on \(A\) over \(Y\).

We denote the $l$-th relative Bergman kernel metric of $-(l K_{X/Y}-f^*A)$ by $B_{l}$. 
For $y = f(x)$, we define
\begin{equation} \label{bergl}
B_{l}(x) = \sup \left\{\, c\, u \wedge \bar{u}(x) \,\middle|\, 
u \in H^0(X_y, (lK_{X_y}-f^*A)),\ \|u\|_{l} \le 1 \right\}.
\end{equation}
Note that $lK_{X_y}-f^*A \sim lD_y$ for generic $y\in Y$.
One can obtain a metric of $K_X$ as follows
\[
e^{-\psi_X} := \frac{1}{|s_D|^{2/l}} \, h_A^{1/l},
\]
and let \(h'_l := \frac{1}{B_l}\) be the Bergman metric on \(l K_{X/Y} - f^* A\). 
This induces a pseudoeffective metric on \(K_{X/Y}\) given by
\[
h_1 := \bigl(h'_l \cdot h_A\bigr)^{1/l}.
\]

Now we have the fiber integral
\[
\Psi_1 := \int_{X/Y}\frac{1}{h_1}  \, h_1 e^{\psi_X} \, 
dz_1 \wedge \cdots \wedge dz_n \wedge d\overline{z}_1 \wedge \cdots \wedge d\overline{z}_n \wedge dt_1 \wedge \cdots \wedge d\overline{t}_m.
\] 
If we instead write
\[
\Psi_1 := e^{\psi_Y} \, dt_1 \wedge \cdots \wedge dt_m \wedge d\overline{t}_1 \wedge \cdots \wedge d\overline{t}_m,
\] 
then \(e^{-\psi_Y}\) defines a metric on \(K_Y\). As the proof of Theorem \ref{G}, one can show that \(e^{\psi_Y}\) is strictly plurisubharmonic, which implies that \(K_Y\) is big.
\end{proof}


For a general surjective morphism, which need not be smooth, we present the following result of Viehweg--Zuo from our perspective.

\begin{theorem} \cite[Theorem 0.2]{VZ01} \label{VZ}
Let \(X\) be a complex projective manifold of non\text{-}negative Kodaira dimension. Then any surjective morphism \(f: X \to \mathbb{P}^1\) has at least three singular fibres.
\end{theorem}

\begin{proof}
We use the same notations as in the proof of Theorem \ref{G}.
As in \eqref{psi-l}, we consider the fiber integral
\[
\Psi_1 := \int_{X/Y} h_1 \frac{1}{h_1} \, e^{\psi_X} \, 
dz_1 \wedge \cdots \wedge dz_n \wedge d\overline{z}_1 \wedge \cdots \wedge d\overline{z}_n \wedge dt_1 \wedge \cdots \wedge d\overline{t}_m,
\] 
which defines a metric on \(K_Y\).
The integrand \(h_1 \frac{1}{h_1} e^{\psi_X}\) is bounded, but the volume form
\[
dz_1 \wedge \cdots \wedge dz_n \wedge d\overline{z}_1 \wedge \cdots \wedge d\overline{z}_n \wedge dt_1 \wedge \cdots \wedge d\overline{t}_m
\]
may blow up near the singular fibers of \(f\). We denote the singular locus by \(Q = \sum Q_i\), where the \(Q_i\) are points in \(\mathbb{P}^1\).

Let 
\begin{align*}
& f^* Q_j = \sum_i a_{ji} P_i, \quad a_{ji} \in \mathbb{Z}_{>0},\\
& \gamma_i = \frac{a_{ji}-1}{a_{ji}} \in \mathbb{Q}_{<1} \quad \text{if } f(P_i) = Q_j,\\
& \beta_j = \max \{ \gamma_i \mid f(P_i) = Q_j \} \in \mathbb{Q}_{<1},\\
& W := \sum_j \beta_j Q_j.
\end{align*}
Hence, we consider the corresponding fiber integral
\[
\Psi'_1 := \int_{X/Y} h_1 \frac{1}{h_1} \, e^{\psi_X} |s_W|^2 \, 
dz_1 \wedge \cdots \wedge dz_n \wedge d\overline{z}_1 \wedge \cdots \wedge d\overline{z}_n \wedge dt_1 \wedge \cdots \wedge d\overline{t}_m,
\]
where \(|s_W|^2\) denotes the canonical section of \(W\) on $Y$. If we write
\[
\Psi'_1 := e^{\psi_{Y,W}} \, dt_1 \wedge \cdots \wedge dt_m \wedge d\overline{t}_1 \wedge \cdots \wedge d\overline{t}_m,
\] 
then $e^{-\psi_{Y,W}}$ defines a metric on \(K_Y + W\), which is pseudoeffective.

Since \(K_Y = \mathcal{O}(-2)\) and all \(\beta_i < 1\), it follows that \(Q\) must contain at least three distinct points. This completes the proof.
\end{proof}

Lastly, we will show how the Ohsawa--Takegoshi theorem can be used to study the existence of sections of the canonical bundle in an algebraic fiber space.

\begin{theorem} \cite[Theorem 12.1]{Sch22} \label{schnell-1}
    Let $f: X \to Y$ be an algebraic fiber space, meaning $X$ and $Y$ are smooth projective varieties and $f$ is surjective with connected fibers. Suppose that $K_Y$ is pseudoeffective.
 Let $F$ be a generic fiber, which satisfies $\kappa(F) \geq 0$. Let $H$ be an ample divisor on $Y$. If $mK_X - f^*H$ is pseudo-effective for some $ m\geq 1$, then $mK_X - f^*H$ becomes effective for $m$ sufficiently large and divisible.

\end{theorem}

The Ohsawa-Takegoshi type theorems are fundamental tools in complex geometry. In this paper, we will employ the following, very general version.
\begin{theorem}[Ohsawa--Takegoshi--Manivel extension] \cite[Theorem $1$]{Man93} \cite[Theorem 5.1]{BP08} \label{ot1} 
Let $X$ be a projective or Stein $n$-dimensional manifold possessing a K\"ahler metric $\omega$, and let $E$ be a Hermitian holomorphic vector bundle of rank $k$ over $X$, and $s$ a global holomorphic section of $E$. The line bundle $L$ has a possible singular metric $h_L$. Assume that $s$ is generically transverse to the zero section, and let
\[ Z = \{ x \in X \mid s(x) = 0, \Lambda^k ds(x) \neq 0 \}, \quad \dim Z = n - k. \]
Moreover, we assume there is a continuous function $\alpha > 1$ such that the following inequalities hold everywhere on $X$:
\begin{align*}
(a) & \quad \sqrt{-1}\Theta_{h_L}(L) + \sqrt{-1} \partial \bar{\partial} \log |s|^2 \geq 0 \quad \text{as current on}\quad X,\\
(b) & \quad \sqrt{-1}\Theta_{h_L}(L) + \sqrt{-1} \partial \bar{\partial} \log |s|^2 \geq \alpha^{-1} \frac{\{\sqrt{-1} \Theta(E) s, s\}}{|s|^2}, \\
(c) & \quad |s|^2 \leq e^{-\alpha}.
\end{align*}

Then for every holomorphic section $f$ over $Z$ with values in the line bundle $\Lambda^n T^*_X \otimes L$ (restricted to $Z$), such that
\[ \int_Z \frac{|f|_{h_L}^2}{|\Lambda^k (ds)|^2} \, dV_\omega < +\infty, \]
there exists a holomorphic section $\tilde{f}$ over $X$ with values in $\Lambda^n T^*_X \otimes L$ satisfies $\tilde{f}|_Y = f$, and
\[ \int_X \frac{|\tilde{f}|_{h_L}^2}{|s|^{2k} (- \log |s|)^2} \, dV_{X, \omega} \leq C \int_Y \frac{|f|_{h_L}^2}{|\Lambda^k (ds)|^2} \, dV_{Y, \omega}, \]
where $C$ is a universal constant.
\end{theorem}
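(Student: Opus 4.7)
The plan is to reduce the extension problem to solving a $\bar\partial$-equation with carefully chosen $L^2$-weights, following the strategy of Ohsawa--Takegoshi and its generalization by Manivel. Using a partition of unity together with a smooth cutoff $\rho$ supported in a tubular neighbourhood of $Z$ and equal to $1$ on a smaller neighbourhood, I would first build a smooth (in general non-holomorphic) global extension $F$ of $f$ as a section of $\Lambda^n T^*_X \otimes L$. The desired extension will then be written as $\tilde f = F - u$, where $u$ must satisfy $\bar\partial u = \bar\partial F$ and must vanish to sufficient order along $Z$; quantitatively, one needs $\int_X |u|^2_{h_L}/\bigl(|s|^{2k}(-\log|s|)^2\bigr)\, dV_\omega < \infty$, which forces $u|_Z = 0$ since $Z$ has codimension $k$.

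The next step is to set up the twisted $L^2$-estimate. The key idea is to replace $h_L$ by the twisted metric $h_L \cdot e^{-k\log|s|^2}$, whose curvature current is $\sqrt{-1}\Theta_{h_L}(L) + k\sqrt{-1}\partial\bar\partial \log|s|^2$; hypothesis (a) ensures that this is a non-negative current on $X$. I would then introduce a Donnelly--Fefferman-type auxiliary weight of the form $\eta := A - \log|s|^2$ (with $A$ large, made possible by hypothesis (c)) and apply the twisted Bochner--Kodaira--Nakano inequality to $L$-valued $(n,1)$-forms. This yields an estimate schematically of the form
\[
\int_X \frac{|u|^2_{h_L}}{\eta\, |s|^{2k}}\, dV_\omega \;\le\; C \int_X \eta\, \frac{|\bar\partial F|^2_{h_L}}{|s|^{2k}}\, dV_\omega,
\]
modulo a positive curvature contribution on the left. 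The role of hypothesis (b) is precisely to absorb the negative curvature term $\{\sqrt{-1}\Theta(E)s, s\}/|s|^2$ that appears when one differentiates $\log|s|^2$, which is the only obstruction to positivity of the resulting twisted curvature form; the continuous function $\alpha>1$ is the absorption constant.

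The final step is a limiting argument: regularize $\log|s|^2$ by $\log(|s|^2+\varepsilon^2)$, solve the corresponding $\bar\partial$-problems to obtain a family $u_\varepsilon$ with uniform $L^2$-bounds, and extract a weak limit $u$ in $L^2_{\mathrm{loc}}$ via weak compactness. Setting $\tilde f := F - u$, a direct computation in coordinates adapted to $Z$ (where locally $s$ can be viewed as the coordinate functions $s_1,\dots,s_k$ defining $Z$) combined with a coarea/residue argument identifies the model right-hand side $\int_X |\bar\partial F|^2_{h_L}/|s|^{2k}\, dV_\omega$ with $\int_Z |f|^2_{h_L}/|\Lambda^k ds|^2\, dV_Z$ up to a universal constant, yielding the stated inequality.

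The main obstacle is the curvature bookkeeping: one must verify that the twisted Bochner--Kodaira--Nakano inequality with the specific weight $\eta$ and the modified metric on $L$ produces exactly the curvature expression matching hypotheses (a) and (b), so that the positive terms dominate the negative ones uniformly. A second delicate point is extracting a limit $u$ for which $\int_X |u|^2_{h_L}/\bigl(|s|^{2k}(-\log|s|)^2\bigr) < \infty$; this requires Fatou's lemma to interact well with the singular weight and relies essentially on the $(-\log|s|)^2$ factor in the denominator, which is the hallmark feature distinguishing an Ohsawa--Takegoshi-type estimate from a naive residue bound and which emerges naturally from the $\eta^{-1}$ factor of the Donnelly--Fefferman trick.
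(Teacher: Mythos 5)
The paper does not prove Theorem~\ref{ot1}; it is imported verbatim as a citation from Manivel \cite{Man93} and Berndtsson--P\u{a}un \cite{BP08}, and the surrounding text (Remark~\ref{l2-ext}) only comments on how to pass from smooth to singular metrics via regularization. There is therefore no in-paper proof to compare against, and I can only assess the internal soundness of your sketch.

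Your outline does capture the standard Ohsawa--Takegoshi--Manivel strategy: smooth extension $F$ via cutoff, solve $\bar\partial u = \bar\partial F$ with a twisted $L^2$-weight, a Donnelly--Fefferman auxiliary function $\eta$ producing the $(-\log|s|)^{-2}$ factor on the left-hand side, and a regularization $\log(|s|^2 + \varepsilon^2)$ plus weak compactness to pass to the limit. That is the correct architecture. However, there is one non-trivial imprecision in the curvature bookkeeping you flag as the "main obstacle," and it would in fact derail the argument as literally stated. You write that the twisted metric is $h_L\cdot e^{-k\log|s|^2}$ and that hypothesis~(a) ensures its curvature $\sqrt{-1}\Theta_{h_L}(L) + k\sqrt{-1}\partial\bar\partial\log|s|^2$ is a non-negative current. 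But (a) only controls \emph{one} copy of $\sqrt{-1}\partial\bar\partial\log|s|^2$, and for a higher-rank $E$ the quantity $\sqrt{-1}\partial\bar\partial\log|s|^2$ is not a positive current (it is the difference of a positive Cauchy--Schwarz term and the curvature piece $\{\sqrt{-1}\Theta(E)s,s\}/|s|^2$), so $(k-1)\sqrt{-1}\partial\bar\partial\log|s|^2$ cannot simply be added without losing positivity. In Manivel's actual argument the weight is not literally $|s|^{-2k}$ but a regularized nonlinear profile $\chi\bigl(\sigma(\log|s|^2)\bigr)$ in $\log|s|^2$ built so that after the Bochner--Kodaira computation the negative curvature contribution is precisely bounded by the right-hand side of (b); the factor $\alpha^{-1}$ (with $\alpha>1$) and the normalization $|s|^2\le e^{-\alpha}$ from (c) enter through the choice of that profile, not as a bare "absorption constant." Your sketch identifies the right tools, but the step from (a), (b), (c) to a genuine non-negative twisted curvature form is where the real work lies and is not established by the argument as written.
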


In the context of Theorem \ref{schnell-1}, we can choose the general fiber \(F\) so that \(f\) is smooth in a small neighborhood of \(F\), that is, \(f\) acts as a holomorphic submersion. This implies that the normal bundle of \(F\) is trivial, and we have \(K_X|_F \simeq K_F\).

In the classical Ohsawa–Takegoshi extension theorem, the metric $ h_L = e^{-\phi_L}$ is assumed to be smooth. However, on Stein or projective manifolds, the theorem can accommodate singular metrics as well, thanks to standard regularization procedures. For more details, the reader is referred to \cite[Section $4$]{DHP13}.




\begin{proof} [The proof of Theorem \ref{schnell-1}]

We only need to demonstrate the existence of one section of $mK_X-f^{\ast}H$ for some $m$, since $mrK_X-f^{\ast}H = r(mK_X-f^{\ast}H) + (r-1)f^{\ast}H$, and $(r-1)f^{\ast}H$ always have many sections for large $r$. Schnell resolved this conjecture under the assumption that $K_Y$ is pseudo-effective (cf. \cite[Theorem 12.1]{Sch22}). His main tools are the existence of Bergman kernel singular Hermitian metrics on pluri-adjoint bundles, and the Koll\'ar-type vanishing theorem for the direct image sheaf, proved by Fujino and Matsumura \cite{FM21}.

Suppose we have a positive integer $m_0$ such that the line bundle $L:= m_0 K_X -f^*H$ is pseudo-effective. This implies that there exists a singular metric $h_L=e^{-\phi_L}$ on $L$ with semi-positive curvature in the sense of current. While the metric may be quite singular, we can find a sufficiently large integer $m_1$ such that 
$$\mathcal{I}(h_L^{\frac{1}{m_1}}) = \mathcal{O}_X \quad \text{and}\quad \mathcal{I}(h_L^{\frac{1}{m_1}}|_{F}) = \mathcal{O}_F.$$

The above equalities can be explained using the concept of Lelong numbers. For any quasi-psh function $\varphi$, a classical theorem of Skoda \cite[Lemma $5.6$]{Dem12a} shows that if $\mu(\varphi, x_0)< 2$, then the multiplier ideal sheaf $\mathcal{I}(\varphi)$ is trivial around $x$, i.e., $e^{-\varphi}$ is integrable in a neighorhood of $x$.

Returning to our case, we can indeed enlarge $m_1$ so that the Lelong number of $\frac{1}{m_1}\phi_L$ is less than $2$. The corresponding multiplier ideal sheaves are trivial by Skoda's theorem. For a point $y =f(F) \in Y$, we can choose an ample line bundle $A$ on $Y$ such that this point $y$ is the common zero set of the sections $(s_i)$ of $A$.
We know $H$ is an ample divisor; therefore, we can multiply it by a large integer $q$ such that 
\begin{enumerate} \label{sss}
    \item $qH- K_Y \geq 2kA$, in the sense that the difference is a semi-positive line bundle,
    \item $qH + lK_Y$ has sections for any non-negative integer $l$; this is the so-called effective non-vanishing result for ample divisors and pseudo-effective divisors.
\end{enumerate}

Now, for any sufficiently large positive integer $r$, we consider the line bundle 
\begin{equation} \label{section1}
(m_0+m_1)r K_X - (r-2q)f^*H = r(m_1K_{X/Y}+L) + f^*qH + f^*(qH+rm_1 K_Y).
\end{equation}
According to the above $(2)$, i.e., the effective non-vanishing result, we only need to show that the first two terms of the right side of \eqref{section1}, the line bundle $r(m_1K_{X/Y}+L) + qf^*H$ is effective. We can write it as
\begin{align*}
    r(m_1K_{X/Y}+L) + qf^*H &=  K_X + (m_1r-1)K_{X/Y} + rL +f^*qH - f^*K_Y  \\
    &=  K_X + \frac{m_1r-1}{m_1} (m_1K_{X/Y} +L) + \frac{1}{m_1} L + f^*qH - f^*K_Y.
\end{align*}
The key point here is that the line bundle $m_1K_{X/Y}+L$ admits a singular Hermitian with a semi-positive curvature current, but its singularity is very mild. This is the main result in \cite[Theorem $1.1$]{PT18}. We denote this metric by $h_{(m_1)}= B_{m_1}^{-1}$. The dual metric $B_{m_1}$ is the so-called twisted $m_1$-th Bergman kernel metric of $-(m_1K_{X/Y}+L)$. Moreover, for any section $u\in H^0(F, m_1 K_F+L)$ and for any positive integer $k$, we have a bounded inequality
$$
|u^k|^2_{h_{(m_1)}^k} \leq O(1)
$$
holds on $F$. Then we define the $\mathbb{Q}$ line bundle 
$$G:= \frac{m_1r-1}{m_1} (m_1K_{X/Y} +L) + \frac{1}{m_1} L + f^*qH - f^*K_Y$$
with the induced singular metric $h_G$ having semi-positive curvature, i.e.,
\begin{equation*}
h_G := h_{(m_1)}^{\frac{m_1r-1}{m_1}}\otimes h_L^{\frac{1}{m_1}} \otimes h_f.
\end{equation*}
Here, $h_f$ is the pullback of the smooth positive metric on $qH-K_Y$. The direct calculation of curvature shows that
$$
\sqrt{-1}\Theta_{h_G}(G) +k\sqrt{-1}\pa\dbar \log|s|^2 \geq f^*\xu \Theta(qH-K_Y-kA).
$$
The right-hand side is semi-positive and more positive than $f^*kA$ due to the choice of $q$. Hence the conditions $(a)$ and $(b)$ in Theorem \ref{ot1} are satisfied. The condition $(c)$ is also easy to achieve by adjusting the smooth metric on the line bundle $A$.

To apply the Ohsawa--Takegoshi extension theorem, we now need to verify the integrability condition. The assumption in the conjecture renders the existence of a section $u_F \in H^0(F, rK_F)$. Let $u_r := u_F^{\otimes(m_0+m_1)} \otimes u_{f^*(qH)} \in H^0(F, r(m_1K_{X/Y}+L) + f^*(qH))=H^0(F, K_X+G)$, where $u_{f^*(qH)}$ is the pullback of some nonzero section of $qH$. We aim to extend $u_r$ from $F$ to the entire $X$ using the above Ohsawa-Takegoshi theorem.
Now we turn to the integrability condition, since
\begin{align*} 
\int_F |u_r|^2_{h_G} dV &= \int_F \big(|u_r^{\otimes m_1}|^2_{h_G^{\otimes m_1}}\big)^{\frac{1}{m_1}} dV \\
&= C \int_F \big(|u_r^{\otimes m_1}|^2_{h_{(m_1)}^{m_1r-1}\cdot h_L} \big)^{\frac{1}{m_1}} dV \\
&\leq  C \int_F \big(|g|^2_{h_L}\big)^{\frac{1}{m_1}} dV \\ &< \infty.
\end{align*}
Here $g$ is a section of $L$ and the final inequality follows from choosing $m_1$ such that $\mathcal{I}(h_L^{\frac{1}{m_1}}|_F) = \mathcal{O}_F$. Therefore, by the Ohsawa--Takegoshi Theorem \ref{ot1}, we can extend $u_r$ to a section $U_r \in H^0(X,  r(m_1K_{X/Y}+L) + qf^*H)$. On the other hand, we know a section $\sigma_r \in H^0(Y, qH+rm_1K_Y)$ exists due to the effective non-vanishing result. Combing this with formula \eqref{section1}, we conclude that $U_r \otimes f^* \sigma_r$ is a section of $(m_0+m_1)r K_X - (r-2q)f^*H$ on $X$. Note that here $r$ needs to be larger than $2q$; thus, the proof is complete.
\end{proof}

\begin{remark}
    According to a theorem of Campana and P\u{a}un: Let \( X \) be a smooth projective variety and let \( f \colon X \to Y \) be a morphism with connected fibers such that \( Y \) is smooth and such that \( K_F \) is pseudoeffective for a general fiber \( F \) of \( f \). Then the divisor
\[
K_{X/Y} - \text{Ram}f
\]
is pseudoeffective. We can relax the assumption on the pseudo-effectiveness of \( K_Y \) to the pseudo-effectiveness of \( K_Y + f(\mathrm{Ram}f) \).
.
\end{remark}

\end{document}